\renewcommand{\bar}{\overline}
\renewcommand{\hat}{\widehat}
\renewcommand{\tilde}{\widetilde}
\newtheorem{thm}{Theorem}[section]
\newtheorem{lem}[thm]{Lemma}
\theoremstyle{definition}
\newtheorem{defn}{Definition}[section]
\newcommand{\scr}[1]{\mathscr #1}
\definecolor{wco}{rgb}{0.5,0.2,0.3}
\numberwithin{equation}{section} \theoremstyle{remark}
\newtheorem{rem}{Remark}[section]
\newcommand{\ua}{\uparrow}
\title{{\bf CLT and MDP for McKean-Vlasov SDEs }
}
\author{
{\bf  Yongqiang Suo and Chenggui Yuan
 }\\
\footnotesize{Department of Mathematics, Swansea University, Bay Campus, SA1 8EN, UK}\\
}
\begin{document}
\def\A{\mathscr{A}}
\def\G{\mathscr{G}}
\def\eq{\equation}
\def\bg{\begin}
\def\ep{\epsilon}
\def\111{±ßÖµÎÊÌâ(1)--(2)}
\def\x{\|x\|}
\def\y{\|y\|}
\def\xr{\|x\|_r}
\def\xrr{(\sum_{i=1}^T|x_i|^r)^{\frac{1}{r}}}
\def\R{\mathbb R}
\def\ff{\frac}
\def\ss{\sqrt}
\def\B{\mathbf B}
\def\N{\mathbb N}
\def\kk{\kappa} \def\m{{\bf m}}
\def\dd{\delta} \def\DD{\Dd} \def\vv{\varepsilon} \def\rr{\rho}
\def\<{\langle} \def\>{\rangle} \def\GG{\Gamma} \def\gg{\gamma}
  \def\nn{\nabla} \def\pp{\partial} \def\EE{\scr E}
\def\d{\text{\rm{d}}} \def\bb{\beta} \def\aa{\alpha} \def\D{\scr D}
  \def\si{\sigma} \def\ess{\text{\rm{ess}}}\def\lam{\lambda}
\def\beg{\begin} \def\beq{\begin{equation}}  \def\F{\scr F}
\def\Ric{\text{\rm{Ric}}} \def \Hess{\text{\rm{Hess}}}
\def\e{\text{\rm{e}}} \def\ua{\underline a} \def\OO{\Omega}  \def\oo{\omega}
 \def\tt{\tilde} \def\Ric{\text{\rm{Ric}}}
\def\cut{\text{\rm{cut}}} \def\P{\mathbb P} \def\ifn{I_n(f^{\bigotimes n})}
\def\C{\scr C}      \def\alphaa{\mathbf{r}}     \def\r{r}
\def\gap{\text{\rm{gap}}} \def\prr{\pi_{{\bf m},\varrho}}  \def\r{\mathbf r}
\def\Z{\mathbb Z} \def\vrr{\varrho} \def\l{\lambda}
\def\L{\scr L}\def\Tilde{\tilde} \def\TILDE{\tilde}\def\II{\mathbb I}
\def\i{{\rm in}}\def\Sect{{\rm Sect}}\def\E{\mathbb E} \def\H{\mathbb H}
\def\M{\scr M}\def\Q{\mathbb Q} \def\texto{\text{o}} \def\LL{\Lambda}
\def\Rank{{\rm Rank}} \def\B{\scr B} \def\i{{\rm i}} \def\HR{Hat{\R}^d}
\def\to{\rightarrow}\def\l{\ell}\def\ll{\lambda}
\def\8{\infty}\def\ee{\epsilon} \def\Y{\mathbb{Y}} \def\lf{\lfloor}
\def\rf{\rfloor}\def\3{\triangle}\def\H{\mathbb{H}}\def\S{\mathbb{S}}\def\1{\lesssim}
\def\va{\varphi}

\def\R{\mathbb R}  \def\B{\mathbf
B}
\def\N{\mathbb N} \def\kk{\kappa} \def\m{{\bf m}}
\def\dd{\delta} \def\DD{\Delta} \def\vv{\varepsilon} \def\rr{\rho}
\def\<{\langle} \def\>{\rangle} \def\GG{\Gamma} \def\gg{\gamma}
  \def\nn{\nabla} \def\pp{\partial} \def\EE{\scr E}
\def\d{\text{\rm{d}}} \def\bb{\beta} \def\aa{\alpha} \def\D{\scr D}
  \def\si{\sigma} \def\ess{\text{\rm{ess}}}
\def\beg{\begin} \def\beq{\begin{equation}}  \def\F{\scr F}
\def\Ric{\text{\rm{Ric}}} \def\Hess{\text{\rm{Hess}}}
\def\e{\text{\rm{e}}} \def\ua{\underline a} \def\OO{\Omega}  \def\oo{\omega}
 \def\tt{\tilde} \def\Ric{\text{\rm{Ric}}}
\def\cut{\text{\rm{cut}}} \def\P{\mathbb P} \def\ifn{I_n(f^{\bigotimes n})}
\def\C{\scr C}      \def\aaa{\mathbf{r}}     \def\r{r}
\def\gap{\text{\rm{gap}}} \def\prr{\pi_{{\bf m},\varrho}}  \def\r{\mathbf r}
\def\Z{\mathbb Z} \def\vrr{\varrho} \def\ll{\lambda}
\def\L{\scr L}\def\Tt{\tt} \def\TT{\tt}\def\II{\mathbb I}
\def\i{{\rm in}}\def\Sect{{\rm Sect}}\def\E{\mathbb E} \def\H{\mathbb H}
\def\M{\scr M}\def\Q{\mathbb Q} \def\texto{\text{o}} \def\LL{\Lambda}
\def\Rank{{\rm Rank}} \def\B{\scr B} \def\i{{\rm i}} \def\HR{\hat{\R}^d}
\def\to{\rightarrow}\def\l{\ell}
\def\8{\infty}\def\X{\mathbb{X}}\def\3{\triangle}
\def\V{\mathbb{V}}\def\M{\mathbb{M}}\def\W{\mathbb{W}}\def\Y{\mathbb{Y}}\def\1{\lesssim}

\def\va{\varphi}
\def\l{\lambda}
\def\var{\varphi}
\renewcommand{\bar}{\overline}
\renewcommand{\hat}{\widehat}
\renewcommand{\tilde}{\widetilde}
  
\maketitle
\begin{abstract}
Under  a Lipschitz condition on distribution dependent  coefficients, the central limit theorem and the moderate deviation principle are obtained for solutions of McKean-Vlasov type stochastic differential equations, which extend from the corresponding results for classical stochastic differential equations to the distribution dependent setting.
\end{abstract}
AMS Subject Classification: 60F10, 60H10, 34K26.

Keywords: McKean-Vlasov SDEs, Central limit theorem, Moderate deviation principle, Weak convergence method.
\section{Introduction}
In recent years, McKean-Vlasov stochastic differential equations (SDEs for short) have received increasing attentions by researchers. They are also called as mean-field SDEs or distribution dependent SDEs, which are much more involved than classical SDEs as the drift and diffusion coefficients depending on the solution and the law of solution. 
In a nutshell, this kind of equations play important roles in characterising non-linear Fokker-Planck equations and environment dependent financial systems, see \cite{D1,D2,F,G,M,S,V,Y} and references therein. Also, this kind of SDEs have been applied to characterise partial differential equations (PDEs for short) involving the Lions derivative (L-derivative for short), which was introduced by P.-L. Lions in his lecture notes \cite{CP}, see also \cite{B,H,L,R1,R2} for more details. Additionally, \cite{W} investigated the distribution dependent SDEs for Landau type equations. The analysis of stochastic particle systems (that is why McKean-Vlasov equations can be treated as the limiting behaviour of individual particles)  has developed as crucial mathematic tools modelling economic and finance systems.  

It is well known that the key point of LDPs is to show the probability property of a rare event, see \cite{BaoJ,C,DW,HI,Z}. In the case of stochastic process, the idea is to find a deterministic path around which the diffusion is concentrated with high probability. In a nutshell, the stochastic motion can be interpreted as a small perturbation of the deterministic path. There are two main approaches to investigate LDPs, one is weak convergence method, the other one is based on exponential approximation argument. For instance, \cite{DW} investigated the Freidlin-Wentzell LDP in path space for McKean-Vlasov equations and the functional iterated logarithm  law by using techniques of exponential approximation arguments.
 In this paper, we investigate the central limit theorem (CLT for short) and the moderate deviation principle (MDP for short) for  solutions of  distribution dependent SDEs by using the weak convergence approach. It is worth noting that the weak convergence approach results in a convenient representation formula for the LDPs rate function.

The motivation of the MDP study comes from \cite{R2}, which investigate the Bismut formula for Lions derivative of distribution dependent SDEs and applications under the Lipschitz conditions on coefficients.

Let $\mathscr{P}(\R^d)$ be the space of all probability measures on $\R^d$,  consider the following distribution dependent SDE on $\R^d$:
\begin{equation}\label{eq1.2}
\d X_t^\ep=b_t(X_t^\ep,\mathscr{L}_{X_t^\ep})\d t
+\ss\ep\sigma_t(X_t^\ep,\mathscr{L}_{X_t^\ep})\d W_t,
~~X_0^\ep=x,
\end{equation}
where $W_t$ is the $d$-dimensional Brownian motion defined on a complete filtered probability space $(\Omega,\{\mathscr{F}_t\}_{t\ge 0},\P)$, $\mathscr{L}_{X_t^\ep}$ is the law of  $X_t^\ep$,
and $$b:[0,\8)\times\R^d\times\mathscr{P}(\R^d)\rightarrow\R^d,~~\sigma:[0,\8)\times\R^d\times\mathscr{P}(\R^d)\rightarrow\R^{d\otimes d}.$$

To give the main results, in the sequel, we first recall the theory of LDP.

Consider the Cameron-Martin space associated with the Brownian motion $\{W_t; t\in [0,T]\}$, the space of all absolutely continuous paths on the interval $[0,T]$ which starts at $0$ and have derivative almost everywhere which is $L^2([0,T])$ integrable, that is,
$$\H=\Big\{h\in C ([0,T];\R^d):h(0)={\bf 0}, h(\cdot)=\int_0^\cdot\dot{h}(s)\d s; \dot{h}\in L^2([0,T];\R^d)\Big\}.$$
 It is again a Hilbert space with inner product $\langle h_1,h_2\rangle_{\H}:=\int_0^T\langle \dot{h}_1(s),\dot{h}_2(s)\rangle\d s$. 
 Let $\mathscr{A}$ denote the class of $\R^d$ valued $\{\mathscr{F}_t\}$-predictable processes $h(\omega,\cdot)$ belonging to $\H$ a.s. Let 
 $$S_N:=\{h\in\H;~\int_0^T|\dot{h}(s)|^2\d s\le N\}.$$
 $S_N$ is endowed with the weak topology induced from $\H$. Define
 $$\mathscr{A}_N:=\{h\in\mathscr{A},h(\omega)\in S_N,~\P-a.s.\}.$$
 
 We also recall the definition of $L$-derivative. Let 
$$\mathscr{P}_2(\R^d)=\Big\{\mu\in\mathscr{P}(\R^d)
:\mu(|\cdot|^2):=\int_{\R^d}|x|^2\mu(\d x)<\8\Big\}.$$
Then $\mathscr{P}_2(\R^d)$ is a Polish space under the Wasserstein distance
$$\mathbb{W}_2(\mu,\nu):=\inf_{\pi\in\mathscr{C}(\mu,\nu)}
\Big(\int_{\R^d\times\R^d}|x-y|^2\pi(\d x,\d y)\Big)^{\frac{1}{2}},~~\mu,\nu\in\mathscr{P}_2(\R^d),$$
where $\mathscr{C}(\mu,\nu)$ is the set of couplings of $\mu,\nu$; that is, a probability measure $\pi$ on the product space $(\R^d\times\R^d, \mathscr{F}\times\mathscr{F})$ such that $\pi(\cdot\times\R^d)=\mu$ and $\pi(\R^d\times\cdot)=\nu$.
Moreover, the Wasserstein metric induces a topology on $\mathscr{P}_2(\R^d)$, which has been shown to be the topology of weak convergence of measure together with the convergence of all moments of order up to $2$, see \cite[Chapter5]{CR}. We will use $\bf{0}$ denote vectors with components $0$.

 \begin{defn}\label{def1}
 Let $f:\mathscr{P}_2(\R^d)\rightarrow\R$.
 \begin{enumerate}
 \item[(1)] $f$ is called $L$-differentiable at $\mu\in\mathscr{P}_2(\R^d)$, if the functional
 $$L^2(\R^d\rightarrow\R^d,\mu)\ni\phi\mapsto f(\mu\circ(Id+\phi)^{-1})$$
 is Fr\'echet differentiable at ${\bf 0}\in L^2(\R^d\rightarrow\R^d,\mu)$; that is, there exists a
 $\gamma\in L^2(\R^d\rightarrow\R^d,\mu)$ such that
\begin{equation}\label{eq1.1}
\lim_{\mu(|\phi|^2)\rightarrow 0}\frac{|(f)(\mu\circ(Id+\phi)^{-1})-(f)(\mu)-\mu(\langle\phi,\gamma\rangle)|}
{\ss{\mu(|\phi|^2)}}=0,
\end{equation} 
where $\mu(\langle\phi,\gamma\rangle):=\int_{\R^d}\langle\phi(\xi),\gamma(\xi)\rangle\mu(\d\xi)$.
In this case, we donote $D^Lf(\mu)=\gamma$ and call it the $L$-derivative of $f$ at $\mu$.

\item[(2)] If the $L$-derivative $D^Lf(\mu)$ exists for all $\mu\in\mathscr{P}_2(\R^d)$, then $f$ is called $L$-differentiable. If moreover, for every $\mu\in\mathscr{P}_2(\R^d)$ there exists a $\mu$-version $D^Lf(\mu)(\cdot)$ such that $D^Lf(\mu)(x)$ is jointly continuous in $(x,\mu)\in\R^d\times\mathscr{P}_2(\R^d)$, we denote $f\in C^{(1,0)}
(\mathscr{P}_2(\R^d))$.
 \end{enumerate}
 \end{defn}

In this paper, we use the symbol $"\Rightarrow"$ to denote convergence in distribution.

 The following uniform LDP criteria was presented in \cite{LW}.
 \begin{lem}\label{thm1}
 For any $\ep>0$, let $\Gamma^\ep$ be a measurable mapping from $C([0,T];\R^d)$ into $C([0,T];\R^d)$. Suppose that $\{\Gamma^\ep\}_{\ep>0}$ satisfies the following assumptions: 
 There exists a measurable map $\Gamma^0:C([0,T];\R^d)\rightarrow C([0,T];\R^d)$ such that 
 \begin{enumerate}
 \item[\rm{(a)}] For every $N<+\8$ and any family $\{h_\ep;\ep>0\}\subset\mathscr{A}_N$ satisfying that $h_\ep$ converges in distribution as $S_N$-valued random variables to $h$ as $\ep\rightarrow0$, then $$\Gamma^\ep\Big(W_\cdot+\frac{1}{\ss\ep}\int_0^\cdot\dot{h}_\ep(s)\d s\Big)\Rightarrow\Gamma^0\Big(\int_0^\cdot\dot{h}(s)\d s\Big)  \mbox{ as }\ep\rightarrow0.$$
 \item[\rm{(b)}] For every $N<+\8$, the set $\{\Gamma^0(\int_0^\cdot\dot{h}(s)\d s); h\in S_N\}$ is a compact subset of  $C([0,T];\R^d)$.
 \end{enumerate}
 Then the family $\{\Gamma^\ep(W_\cdot)\}_{\ep>0}$ satisfies a large deviation principle in $C([0,T];\R^d)$ with the rate function $I$ given by 
 \begin{equation}\label{eq2.1}
 I(g):=\inf_{h\in\H;g=\Gamma^0(\int_0^\cdot\dot{h}(s)\d s)}
 \Big\{\frac{1}{2}\int_0^T|\dot{h}(s)|^2\d s\Big\},~~g\in C([0,T];\R^d)
 \end{equation}
 with $\inf\emptyset=\8$ by convention.
 \end{lem}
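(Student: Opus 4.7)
The plan is to verify the Laplace principle on the Polish space $C([0,T];\R^d)$ (which is equivalent to the LDP stated) via the Budhiraja--Dupuis variational representation. For any bounded continuous $f:C([0,T];\R^d)\to\R$, that representation reads
\begin{equation*}
-\ep\log\E\exp\Big(-\frac{f(\Gamma^\ep(W))}{\ep}\Big)=\inf_{h\in\mathscr{A}}\E\Big[\frac{1}{2}\int_0^T|\dot h(s)|^2\d s+f\Big(\Gamma^\ep\Big(W_\cdot+\frac{1}{\ss\ep}\int_0^\cdot\dot h(s)\d s\Big)\Big)\Big].
\end{equation*}
The task then splits into matching asymptotic upper and lower bounds on this quantity, plus goodness of $I$.

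For the Laplace liminf (equivalently, the LDP upper bound), pick for each $\ep$ a control $h_\ep\in\mathscr{A}$ that is $\ep$-optimal in the infimum. Since $h\equiv\mathbf{0}$ is feasible and $f$ is bounded, the cost of $h_\ep$ is bounded uniformly in $\ep$, which forces $h_\ep\in\mathscr{A}_N$ for some fixed $N$. As $S_N$ is weakly compact (by Banach--Alaoglu) and Polish in its weak topology, the laws of $h_\ep$ are tight, so along a subsequence $h_\ep\Rightarrow h$ in $S_N$. Assumption (a) then yields $\Gamma^\ep(W_\cdot+\ep^{-1/2}\int_0^\cdot\dot h_\ep(s)\d s)\Rightarrow\Gamma^0(\int_0^\cdot\dot h(s)\d s)$; combining continuity of $f$ with weak lower semicontinuity of $h\mapsto\frac12\int_0^T|\dot h(s)|^2\d s$ (via Skorokhod representation and Fatou) gives
\begin{equation*}
\liminf_{\ep\to 0}\Big(-\ep\log\E\exp(-f(\Gamma^\ep(W))/\ep)\Big)\ge\inf_{h\in\H}\Big\{\tfrac12\int_0^T|\dot h(s)|^2\d s+f\Big(\Gamma^0\Big(\int_0^\cdot\dot h(s)\d s\Big)\Big)\Big\},
\end{equation*}
which, after interchanging the two infima, equals $\inf_g\{f(g)+I(g)\}$ by definition of $I$.

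For the Laplace limsup (equivalently, the LDP lower bound), fix $g\in C([0,T];\R^d)$ with $I(g)<\8$ and choose $h\in\H$ with $g=\Gamma^0(\int_0^\cdot\dot h(s)\d s)$ and $\tfrac12\int_0^T|\dot h(s)|^2\d s\le I(g)+\dd$. Insert this deterministic $h$, constant in $\ep$, into the variational representation; assumption (a) applied with $h_\ep\equiv h$ delivers $\Gamma^\ep(W_\cdot+\ep^{-1/2}\int_0^\cdot\dot h(s)\d s)\Rightarrow g$, hence by bounded convergence
\begin{equation*}
\limsup_{\ep\to 0}\Big(-\ep\log\E\exp(-f(\Gamma^\ep(W))/\ep)\Big)\le\tfrac12\int_0^T|\dot h(s)|^2\d s+f(g)\le f(g)+I(g)+\dd.
\end{equation*}
Taking the infimum over admissible $g$ and sending $\dd\to 0$ closes the two bounds. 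Goodness of $I$ follows from (b), since $\{I\le M\}\subset\{\Gamma^0(\int_0^\cdot\dot h(s)\d s):h\in S_{2M}\}$ is compact.

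The main obstacle lies in the liminf step: one needs tightness and joint weak convergence of the control $h_\ep$ together with the perturbed driving noise $W_\cdot+\ep^{-1/2}\int_0^\cdot\dot h_\ep(s)\d s$, and must then transfer this convergence through the possibly only measurable map $\Gamma^\ep$. Assumption (a) is engineered precisely to package this transfer as a single hypothesis, so at the abstract level the work reduces to the localisation $h_\ep\in\mathscr{A}_N$ and the standard weak-compactness argument; the genuine difficulty is relegated to verifying (a) in concrete applications, which is where the distribution-dependent structure of \eqref{eq1.2} enters.
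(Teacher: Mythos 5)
The paper offers no proof of this lemma: it is quoted as a known criterion from \cite{LW} (which in turn rests on the Budhiraja--Dupuis variational representation), so there is no internal argument to compare against. Your sketch is precisely the standard proof from that literature: the Bou\'e--Dupuis representation of $-\ep\log\E\exp(-f(\Gamma^\ep(W))/\ep)$, near-optimal controls plus weak compactness of $S_N$ and hypothesis (a) for the Laplace liminf (LDP upper bound), deterministic controls plus (a) for the Laplace limsup (LDP lower bound), and the compact sets of (b) for goodness of $I$; so in substance you have supplied the proof the paper outsources.

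Two steps are stated more strongly than they are proved. First, a uniform bound on the \emph{expected} cost of the near-optimal controls $h_\ep$ does not ``force $h_\ep\in\mathscr{A}_N$'': membership in $\mathscr{A}_N$ requires the pathwise bound $\int_0^T|\dot h_\ep(s)|^2\d s\le N$ a.s. The standard remedy, which your sketch omits, is a truncation: replace $h_\ep$ by $h_\ep\mathbf{1}_{[0,\tau_N]}$ with $\tau_N$ the first time the running cost reaches $N$, note that Chebyshev gives $\P(\tau_N<T)\le C/N$ uniformly in $\ep$, and use boundedness of $f$ to see that this substitution perturbs the variational cost by at most $2\|f\|_\infty C/N$, which disappears upon letting $N\to\8$ after $\ep\to0$. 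Second, ``goodness follows from (b)'' needs the small additional observation that $\{I\le M\}\subset\{\Gamma^0(\int_0^\cdot\dot h(s)\d s);\,h\in S_{2(M+\delta)}\}$ for every $\delta>0$, and each such set is compact, hence closed; this yields both precompactness and closedness of the level sets, whereas containment in a single compact set alone does not give lower semicontinuity of $I$. With these two routine repairs your argument is a correct and complete proof of the lemma, following the same route as the cited source.
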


The rest of the paper is organised as follows. In Sect.2 we give the assumptions and the main results Theorem \ref{CLT} and \ref{thm2}; Sect.3 are Sect.4 are devoted to the proofs of Theorem \ref{CLT} and \ref{thm2}, respectively.

Throughout this paper,  we let $C(\alpha, \beta)$ stand for a general constant which depends on parameters $\alpha, \beta$, and may change from occurrence to occurrence.
\section{Main results}
We make the following assumptions about \eqref{eq1.2}.
\begin{enumerate}
\item[({\bf H1})]
The coefficients $b:[0,\8)\times\R^d\times\mathscr{P}_2(\R^d)\rightarrow\R^d,~~\sigma:[0,\8)\times\R^d\times\mathscr{P}_2(\R^d)\rightarrow\R^{d\otimes d}$ are continuous. 
There exists a increasing function $K:[0,\8)\rightarrow[0,\8)$ such that
\begin{equation}\label{eq1.3}
\begin{split}
&|b_t(x,\mu)-b_t(y,\nu)|+\|\sigma_t(x,\mu)-\sigma_t(y,\nu)\|\\
&\le K(t)(|x-y|+\W_2(\mu,\nu)),~~t\ge0,x,y\in\R^d,\mu,\nu\in\mathscr{P}_2(\R^d),
\end{split}
\end{equation}
and 
\begin{equation}\label{eq1.4}
|b_t({\bf 0},\delta_{{\bf 0}})|+\|\sigma_t({\bf 0},\delta_{{\bf 0}})\|
\le K(t),~~t\ge0,
\end{equation}
where and in what follows, for $x\in\R^d$, $\delta_x$ stands for the Dirac measure at $x$, $\|\cdot\|$ is the operator norm.
  \item[({\bf H2})] The coefficient $b_t(x,\mu)$ are differentiable with respect to $x$ and $\mu$ respectively, and its derivative functions satisfy 
  \begin{equation*}
  \begin{split}
  &|\nabla b_t(\cdot,\mu)(x)-\nabla b_t(\cdot,\nu)(y)|\le K(t)(|x-y|+\W_2(\mu,\nu)),\\
  &\|D^Lb_t(x,\cdot)(\mu)-D^Lb_t(y,\cdot)(\nu)\|\le K(t)(|x-y|+\W_2(\mu,\nu)),\\
  &\max\{\|\nabla b_t(\cdot,\mu)(x)\|,\|D^Lb_t(x,\cdot)(\mu)\|\}\le K(t)
  \end{split}
  \end{equation*}
  hold for all $t\ge0, (x,\mu)\in\R^d\times\mathscr{P}_2(\R^d)$.
  \end{enumerate}
\begin{rem}
In our setting, $b_t(x,\mu)=(b_t^i(x,\mu))_{i=1,\cdots,d}$ is a $\R^d$-valued function, and $\sigma_t(x,\mu)=(\sigma_t^{ij}(x,\mu))_{i,j=1,\cdots,d}$ is a $\R^{d\otimes d}$-valued function, thus, we write $D^L b_t(x,\mu)=(D^Lb_t^i(x,\mu))_{i=1,\cdots,d}$.
\end{rem}

Intuitively, as the parameter $\ep$ tends to $0$ in \eqref{eq1.2}, the diffusion term vanishes and we have the following ordinary differential equation
\begin{equation}\label{eq1.6}
\d X_t^0=b_t(X_t^0,\delta_{X_t^0})\d t,
\end{equation}
with the same initial datum as \eqref{eq1.2}, that is, $X_0^0=x$.
Since $x$ is deterministic, we deduce that $\delta_{X_\cdot^0}$ is a Dirac measure centered on the path $X_\cdot^0$.

In the following, we shall investigate the deviations of $X^\ep$ from the solution $X^0$ of ordinary differential equation, that is, the asymptotic behaviour of the trajectory,
\begin{align}\label{eq1.7}
\bar{X}_t^\ep=\frac{1}{\ss\ep\lambda(\ep)}(X_t^\ep-X_t^0),~t\in[0,T].
\end{align}
\begin{enumerate}
\item[(LDP)] The case $\lambda(\ep)=1/{\ss\ep}$ provides some large deviation estimates. \cite{DW} proved that the law of the  solution $X^\ep$ satisfies an LDP by means of the discussion of exponential tightness.
\item[(CLT)] If $\lambda(\ep)\equiv 1$, we are in the domain of the CLT. We will show that $\frac{X^\ep-X^0}{\ss\ep}$ converges to a stochastic process as $\ep\rightarrow0$, see Theorem \ref{CLT}.
\item[(MDP)] To fill in the gap between the CLT scale and the LDP scale, we will study the MDP, that is,  the deviation scale $\lambda(\ep)$ satisfies
\begin{align}\label{eq1.8}
\lambda(\ep)\rightarrow\8,~~\ss\ep\lambda(\ep)\rightarrow 0, ~~\mbox{as}~\ep\rightarrow 0.
\end{align}
In the MDP case, we will prove that $\{\bar{X}^\ep;\ep\in(0,1)\}$ satisfies an LDP, see Theorem \ref{thm2} below.
\end{enumerate}

Our first main result is the following central limit theorem.
\begin{thm}\label{CLT}
Under assumptions ({\bf H1}) and ({\bf H2}),
\begin{align*}
\E\Big(\sup_{0\le t\le T}\Big|\frac{X^\ep(t)-X^0(t)}{\ss\ep}-Z(t)\Big|^p\Big)\1\ep,
\end{align*} 
 where $Z$ is determined by 
\begin{align}\label{Y}
\d Z_t=\nabla_{Z_t}b(X_t^0,\delta_{X_t^0})\d t+\E\langle
D^Lb_t(y,\cdot)(\delta_{X_t^0})(X_t^0),Z_t\rangle|_{y=X_t^0}\d t
+\sigma(X_t^0,\delta_{X_t^0})\d W_t.
\end{align}
\end{thm}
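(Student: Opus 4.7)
Set $Y^\ep_t := (X^\ep_t-X^0_t)/\sqrt\ep$ and $U^\ep_t := Y^\ep_t-Z_t$; the plan is to prove $\E\sup_{t\le T}|U^\ep_t|^p\lesssim \ep$ by writing an SDE for $U^\ep$ whose drift is linear in $U^\ep$ plus a remainder that is $O(\sqrt\ep)$ in $L^p$, and then invoking BDG and Gronwall.

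First I would record the routine a priori bounds. Under (H1), standard MV--SDE estimates give $\E\sup_{t\le T}(|X^\ep_t|^p+|X^0_t|^p)\le C_p$, and subtracting \eqref{eq1.2} from \eqref{eq1.6} and using \eqref{eq1.3} with Gronwall gives $\E\sup_{t\le T}|X^\ep_t-X^0_t|^p\lesssim \ep^{p/2}$, hence $\E\sup_{t\le T}|Y^\ep_t|^p\lesssim 1$. The linearised equation \eqref{Y} for $Z$ is a linear MV--SDE with deterministic coefficients bounded along $X^0$ by (H2), so it admits a unique strong solution satisfying $\E\sup_{t\le T}|Z_t|^p\le C_p$; in particular $U^\ep$ is well-defined.

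Next I would derive the SDE for $U^\ep$ via a two-step expansion of the drift difference. Splitting
$b_t(X^\ep,\L_{X^\ep})-b_t(X^0,\delta_{X^0}) = [b_t(X^\ep,\L_{X^\ep})-b_t(X^0,\L_{X^\ep})] + [b_t(X^0,\L_{X^\ep})-b_t(X^0,\delta_{X^0})],$
I would expand the first bracket by the spatial fundamental theorem of calculus,
$$b_t(X^\ep_t,\L_{X^\ep_t}) - b_t(X^0_t,\L_{X^\ep_t}) = \int_0^1 \nabla b_t(\cdot,\L_{X^\ep_t})\bigl(X^0_t+\theta(X^\ep_t-X^0_t)\bigr)\,\d\theta\,(X^\ep_t-X^0_t),$$
and the second via L-derivative interpolation along an independent copy $\tilde X^\ep$ of $X^\ep$,
$$b_t(X^0_t,\L_{X^\ep_t}) - b_t(X^0_t,\delta_{X^0_t}) = \int_0^1 \tilde\E\bigl\langle D^L b_t(X^0_t,\cdot)\bigl(\L_{X^0_t+\theta(\tilde X^\ep_t-X^0_t)}\bigr)\bigl(X^0_t+\theta(\tilde X^\ep_t-X^0_t)\bigr),\,\tilde X^\ep_t-X^0_t\bigr\rangle\,\d\theta.$$
Dividing by $\sqrt\ep$ and adding/subtracting the ``frozen'' derivatives at $(X^0_t,\delta_{X^0_t})$ then yields
\begin{align*}
\d U^\ep_t &= \nabla b_t(\cdot,\delta_{X^0_t})(X^0_t)\,U^\ep_t\,\d t + \tilde\E\bigl\langle D^L b_t(X^0_t,\cdot)(\delta_{X^0_t})(X^0_t),\,\tilde U^\ep_t\bigr\rangle\,\d t\\
&\qquad {}+R^\ep_t\,\d t + \bigl[\sigma_t(X^\ep_t,\L_{X^\ep_t})-\sigma_t(X^0_t,\delta_{X^0_t})\bigr]\,\d W_t,
\end{align*}
where $R^\ep_t$ gathers the differences between the interpolation-point derivatives and their frozen values, each multiplied by $Y^\ep_t$ or $\tilde Y^\ep_t$.

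By the Lipschitz bounds on $\nabla b$ and $D^L b$ in (H2), each such coefficient difference is controlled by $K(t)(|X^\ep_t-X^0_t|+\W_2(\L_{X^\ep_t},\delta_{X^0_t}))$, which together with $\|Y^\ep_t\|_{L^p}\lesssim 1$ from the a priori step yields $\|R^\ep_t\|_{L^p}\lesssim\sqrt\ep$; similarly (H1) gives $\|\sigma_t(X^\ep_t,\L_{X^\ep_t})-\sigma_t(X^0_t,\delta_{X^0_t})\|_{L^p}\lesssim\sqrt\ep$. Applying BDG to the stochastic integral, Jensen to the Lebesgue integral of $R^\ep$, and the $L^\infty$ bounds on $\nabla b,\,D^L b$ from (H2) to absorb the two linear-in-$U^\ep$ terms, I would arrive at
$$\E\sup_{s\le t}|U^\ep_s|^p\le C\,\ep + C\int_0^t \E\sup_{u\le s}|U^\ep_u|^p\,\d s\qquad (p\ge 2,\ \ep\le 1),$$
and Gronwall's inequality concludes. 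The delicate step is the L-derivative Taylor expansion in the measure argument along the random path $\L_{X^0_t+\theta(\tilde X^\ep_t-X^0_t)}$, together with matching its small-$\ep$ limit against the independent-copy structure $\tilde\E\langle D^L b_t(X^0_t,\cdot)(\delta_{X^0_t})(X^0_t),\tilde Z_t\rangle$ implicit in \eqref{Y}; this hinges on the joint Lipschitz regularity of $D^L b$ supplied by (H2). The absence of a differentiability hypothesis on $\sigma$ is harmless, since the $\sqrt\ep$ prefactor in \eqref{eq1.2} precisely matches the $\sqrt\ep$-closeness of $\L_{X^\ep_t}$ to $\delta_{X^0_t}$.
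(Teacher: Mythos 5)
Your proposal is correct and follows essentially the same route as the paper's proof: the identical splitting of the drift increment into a spatial part and a measure part, the spatial fundamental theorem of calculus together with the $L$-derivative formula of Lemma \ref{lem1} along the interpolation $X^0_t+\theta(X^\ep_t-X^0_t)$, freezing the derivatives at $(X^0_t,\delta_{X^0_t})$, controlling $\W_2(\mathscr{L}_{X^\ep_t},\delta_{X^0_t})$ by $\sqrt{\ep}$ times a moment of $(X^\ep_t-X^0_t)/\sqrt{\ep}$, and then BDG plus Gronwall. The only cosmetic differences are that you state the a priori bound in the sharper form $\E\sup_{t\le T}|X^\ep_t-X^0_t|^p\1\ep^{p/2}$ (which the paper's own Gronwall step also yields and implicitly uses) and that your remainder estimate really uses the $L^{2p}$-moment of $(X^\ep-X^0)/\sqrt{\ep}$ rather than the $L^p$-moment, which is harmless since the a priori estimate holds for every exponent.
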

Our second result is that $X^\ep, \ep\in(0,1)$ satisfies the MDP, that is the following theorem.
\begin{thm}\label{thm2}
Under assumptions ({\bf H1}) and ({\bf H2}), $\bar{X}_\cdot^\ep=\frac{X_\cdot^\ep-X_\cdot^0}{\ss\ep\lambda(\ep)}$ satisfies an LDP on $C([0,T];\R^d)$ with the speed $\lambda^2(\ep)$ and with the rate function $I$, which is defined as follows:
\begin{equation}\label{rate}
 I(g):=\inf_{\{h\in\H;g=\Gamma^0(\int_0^\cdot\dot{h}(s)\d s)\}}
 \Big\{\frac{1}{2}\int_0^T|\dot{h}(s)|^2\d s\Big\},~~g\in C([0,T];\R^d),
 \end{equation}
where $\Gamma^0(\int_0^\cdot\dot{h}(s)\d s):=Y_\cdot^h$ satisfies the following equation:
 \begin{equation}\label{eq3.1}
\d Y_t^h=\Big\{\nabla_{Y_t^h} b_t(\cdot,\delta_{X_t^0})(X_t^0)+\sigma_t(X_t^0,\delta_{X_t^0})\dot{h}(t)\Big\}\d t.
\end{equation}
\end{thm}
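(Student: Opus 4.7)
The plan is to invoke Lemma \ref{thm1} with the small parameter $\tilde\ep:=1/\lambda^2(\ep)$, so that $1/\tilde\ep=\lambda^2(\ep)$ is the prescribed MDP speed. Because the curve of laws $\{\mathscr{L}_{X_t^\ep}\}_{t\in[0,T]}$ is deterministic, \eqref{eq1.2} may be read as a classical SDE with time-inhomogeneous coefficients $\tt b_t^\ep(x):=b_t(x,\mathscr{L}_{X_t^\ep})$ and $\tt\si_t^\ep(x):=\si_t(x,\mathscr{L}_{X_t^\ep})$, so the map $\Gamma^{\tilde\ep}(W_\cdot):=\bar X_\cdot^\ep$ is measurable from $C([0,T];\R^d)$ into itself; the limit map is $\Gamma^0(\int_0^\cdot\dot h(s)\,\d s):=Y_\cdot^h$ defined by \eqref{eq3.1}.

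The first step is to collect uniform moment estimates. Under (H1), Gronwall and BDG give $\E\sup_{t\le T}|X_t^\ep|^p\1 1$ uniformly in $\ep\in(0,1)$, together with the CLT-type bound $\E\sup_{t\le T}|X_t^\ep-X_t^0|^p\1\ep^{p/2}$. In particular $\W_2(\mathscr{L}_{X_t^\ep},\delta_{X_t^0})\1\ss\ep$ for every $t\in[0,T]$. This is the decisive estimate: it forces the mean-field contribution to the drift to be negligible in the MDP limit, since after rescaling by $(\ss\ep\lambda(\ep))^{-1}$ it is of order $\ss\ep\cdot(\ss\ep\lambda(\ep))^{-1}=1/\lambda(\ep)\to 0$.

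For assumption (a), fix $N$ and $\{h_{\tilde\ep}\}\subset\mathscr{A}_N$ converging in law to some $h\in S_N$. By the very definition of $\Gamma^{\tilde\ep}$, one has $\Gamma^{\tilde\ep}\big(W_\cdot+\lambda(\ep)\int_0^\cdot\dot h_{\tilde\ep}(s)\,\d s\big)=(\ss\ep\lambda(\ep))^{-1}(X_\cdot^{\ep,h_{\tilde\ep}}-X_\cdot^0)=:\bar Z_\cdot^\ep$, where $X^{\ep,h_{\tilde\ep}}$ solves
$$\d X_t^{\ep,h_{\tilde\ep}}=b_t(X_t^{\ep,h_{\tilde\ep}},\mathscr{L}_{X_t^\ep})\,\d t+\ss\ep\,\si_t(X_t^{\ep,h_{\tilde\ep}},\mathscr{L}_{X_t^\ep})\,\d W_t+\ss\ep\,\lambda(\ep)\si_t(X_t^{\ep,h_{\tilde\ep}},\mathscr{L}_{X_t^\ep})\dot h_{\tilde\ep}(t)\,\d t.$$
A first-order expansion of $b$ in both variables (via (H2)) together with the $\W_2$-bound above gives
$$\d\bar Z_t^\ep=\nn_{\bar Z_t^\ep} b_t(\cdot,\delta_{X_t^0})(X_t^0)\,\d t+\si_t(X_t^{\ep,h_{\tilde\ep}},\mathscr{L}_{X_t^\ep})\dot h_{\tilde\ep}(t)\,\d t+\tfrac{1}{\lambda(\ep)}\si_t(X_t^{\ep,h_{\tilde\ep}},\mathscr{L}_{X_t^\ep})\,\d W_t+R_t^\ep,$$
where, by the Lipschitz continuity of $\nn b$ and $D^L b$ in (H2), the remainder $R_t^\ep$ is quadratic in $(|X^{\ep,h_{\tilde\ep}}-X^0|,\W_2(\mathscr{L}_{X_t^\ep},\delta_{X_t^0}))$ and therefore $o(1)$ after division by $\ss\ep\lambda(\ep)$, while the martingale term is $O(1/\lambda(\ep))$ in $L^2$. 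To pass to the limit in the control term I would invoke Skorohod representation to upgrade $h_{\tilde\ep}\Rightarrow h$ to almost-sure weak convergence in $S_N$, combine it with the uniform convergence $X^{\ep,h_{\tilde\ep}}\to X^0$ in $C([0,T];\R^d)$ (Gronwall applied to $X^{\ep,h_{\tilde\ep}}-X^0$), and then use weak-strong pairing in $L^2([0,T])$: $\si_s(X_s^{\ep,h_{\tilde\ep}},\mathscr{L}_{X_s^\ep})$ converges strongly to $\si_s(X_s^0,\delta_{X_s^0})$, while $\dot h_{\tilde\ep}\rightharpoonup\dot h$ weakly, so their product integrates to $\int_0^\cdot\si_s(X_s^0,\delta_{X_s^0})\dot h(s)\,\d s$; stability of the linear equation \eqref{eq3.1} then yields $\bar Z^\ep\Rightarrow Y^h$.

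Assumption (b) is comparatively routine: the map $h\mapsto Y^h$ from $S_N$ (weak topology) into $C([0,T];\R^d)$ is continuous because \eqref{eq3.1} is linear with drift coefficient $\nn b$ bounded by $K(t)$ under (H2), and the same weak-strong pairing handles the control term; since $S_N$ is weakly compact as a closed ball of $\H$, its continuous image is compact. The core obstacle throughout is the joint passage to the limit in (a): one must simultaneously weak-limit the controls $\dot h_{\tilde\ep}$, strong-limit the mean-field coefficients, and verify that the Taylor remainder $R^\ep$ is genuinely of lower order than $\ss\ep\lambda(\ep)$. The $\ss\ep$-rate for $\W_2(\mathscr{L}_{X_t^\ep},\delta_{X_t^0})$ together with the second-order regularity in (H2) is precisely what makes the scales match.
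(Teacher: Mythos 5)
Your proposal is essentially correct, but it follows a genuinely different route from the paper. The paper proceeds in two stages: it first introduces the auxiliary process $\bar{Y}^\ep$ obtained by freezing the measure argument at $\delta_{X_t^0}$, proves an LDP for $\bar{Y}^\ep$ via the weak convergence criterion (Lemma \ref{thm1}, through the analogues of your steps (a) and (b) in Lemmas \ref{lem3.2}--\ref{lem3.4}), and then transfers the LDP to $\bar{X}^\ep$ by showing that $\bar{X}^\ep$ and $\bar{Y}^\ep$ are exponentially equivalent (Lemma \ref{equivalence}, which requires a localization by stopping times, exit-time estimates for $\bar Y^\ep$, and the Dembo--Zeitouni-type Lemma \ref{b}). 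You instead apply the criterion directly to $\bar X^\ep$, using the observation that the law curve $\{\mathscr{L}_{X_t^\ep}\}$ is deterministic, so \eqref{eq1.2} is a classical time-inhomogeneous SDE and $\bar X^\ep$ is already the image of a measurable map of $W$; the distribution dependence is then absorbed inside the verification of condition (a), where the CLT-rate bound $\W_2(\mathscr{L}_{X_t^\ep},\delta_{X_t^0})\lesssim\ss\ep$ makes the mean-field contribution of order $1/\lambda(\ep)$ after the MDP rescaling, which is exactly why the limit drift in \eqref{eq3.1} contains $\nabla b$ but no $L$-derivative term. What each route buys: the paper's decomposition isolates the measure dependence once and for all, at the price of the exponential-equivalence machinery; yours is shorter and avoids that machinery entirely, at the price of carrying $\mathscr{L}_{X_t^\ep}$ through the controlled-process analysis. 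Two points in your sketch should be made explicit to close it: (i) a uniform bound $\E\sup_{t\le T}|\bar Z_t^\ep|^2\le C(N,T)$ for the controlled rescaled process (the analogue of the paper's Lemma \ref{lem3.3}), since your Taylor remainder is of the form $K(s)\bigl(|X_s^{\ep,h_{\tilde\ep}}-X_s^0|+\W_2(\mathscr{L}_{X_s^\ep},\delta_{X_s^0})\bigr)|\bar Z_s^\ep|$ and its negligibility needs this bound, not only the convergence $X^{\ep,h_{\tilde\ep}}\to X^0$; (ii) the Lipschitz bound in $\W_2$ from ({\bf H1}) already suffices for the measure-difference term, so your appeal to the Lipschitz continuity of $D^Lb$ is unnecessary there. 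Both are routine Gronwall arguments of the same type the paper uses, so the gap is minor.
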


 \section{Proof of Theorem \ref{CLT}} 
  We first recall a formula of $L$-derivative due to \cite{R2}.
  \begin{lem}\label{lem1}
  Let $(\Omega,\mathscr{F},\P)$ be an atomless probability space, and let $X,Y\in L^2(\Omega\rightarrow\R^d,\P)$ with $\mathscr{L}_X=\mu$. If either $X$ and $Y$ are bounded and $f$ is $L$-differentiable at $\mu$, or $f\in C^{(1,0)}(\mathscr{P}_2(\R^d))$, then 
  \begin{equation}\label{d}
  \lim_{\ep\rightarrow0}\frac{f(\mathscr{L}_{X+\ep Y})-f(\mu)}{\ep}=\E\<D^Lf(\mu)(X),Y\>.
  \end{equation} 
  Consequently,
  \begin{equation}\label{n}
  \Big|\lim_{\ep\downarrow0}\frac{f(\mathscr{L}_{X+\ep Y})-f(\mu)}{\ep}\Big|=|\E\<D^Lf(\mu)(X),Y\>|\le\E\|D^Lf(\mu)\|\ss{\E|Y|^2}.
  \end{equation}
  \end{lem}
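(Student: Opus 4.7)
The plan is to establish the chain-rule identity \eqref{d} first and then read off \eqref{n} by Cauchy-Schwarz. The natural opening move is to verify \eqref{d} when the perturbation is already a function of $X$. If $\phi\in L^2(\R^d\to\R^d,\mu)$ and $Z:=\phi(X)$, then $\L_{X+\ep Z}=\mu\circ(\mathrm{Id}+\ep\phi)^{-1}$, and applying Definition \ref{def1} with perturbation $\ep\phi$ (whose $L^2(\mu)$-norm is $\ep\|\phi\|_{L^2(\mu)}\to 0$) gives
\begin{equation*}
\lim_{\ep\to 0}\frac{f(\L_{X+\ep Z})-f(\mu)}{\ep}
=\int_{\R^d}\<\phi(x),D^Lf(\mu)(x)\>\mu(\d x)
=\E\<D^Lf(\mu)(X),Z\>,
\end{equation*}
which is the claim \eqref{d} in this restricted setting.

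To pass to general $Y\in L^2(\Omega\to\R^d,\P)$, I would introduce $\phi(x):=\E[Y\mid X=x]$, a regular conditional expectation which lies in $L^2(\R^d\to\R^d,\mu)$. The tower property forces $\E\<D^Lf(\mu)(X),Y\>=\E\<D^Lf(\mu)(X),\phi(X)\>$, so by the first step the desired limit matches $\lim_{\ep\to 0}\ep^{-1}\bigl[f(\L_{X+\ep\phi(X)})-f(\mu)\bigr]$. What remains is to show
\begin{equation*}
f(\L_{X+\ep Y})-f(\L_{X+\ep\phi(X)})=o(\ep).
\end{equation*}
This is the main obstacle. A bound via $\W_2$-Lipschitz regularity of $f$ is too crude, since the two laws lie within $\W_2$-distance only $O(\ep)$ of each other. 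The two alternative hypotheses in the lemma are designed precisely to overcome this: under $f\in C^{(1,0)}(\mathscr{P}_2(\R^d))$, one interpolates via $\mu_t:=\L_{X+tY}$ for $t\in[0,\ep]$ and uses joint continuity of $D^Lf$ together with $\E[Y-\phi(X)\mid X]=0$ to cancel the leading-order contribution of the $\sigma(X)$-orthogonal component of $Y$; under boundedness of $X,Y$, one approximates $Y$ by finite-valued $\sigma(X)$-measurable functions and exploits atomless-ness of $(\Omega,\F,\P)$ to apply \eqref{eq1.1} uniformly over the approximating perturbations.

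The consequence \eqref{n} is then immediate from \eqref{d}: the absolute value may be pulled inside the already-existing limit, and Cauchy-Schwarz on $\E\<D^Lf(\mu)(X),Y\>$ yields
\begin{equation*}
|\E\<D^Lf(\mu)(X),Y\>|\le \sqrt{\E|D^Lf(\mu)(X)|^2}\,\sqrt{\E|Y|^2},
\end{equation*}
which matches the stated bound when $\E\|D^Lf(\mu)\|$ is read as the $L^2(\mu)$-norm of the derivative field. The entire substance of the proof is thus concentrated in the $o(\ep)$ step of the second paragraph; everything else is a direct unfolding of Definition \ref{def1}.
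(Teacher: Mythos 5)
First, note that the paper does not prove this lemma at all: it is quoted verbatim from the reference [R2] (Ren--Wang), so there is no in-paper argument to compare against and your attempt must stand on its own. Within your attempt, the first step (the case $Y=\phi(X)$, where $\mathscr{L}_{X+\epsilon\phi(X)}=\mu\circ(\mathrm{Id}+\epsilon\phi)^{-1}$ and Definition \ref{def1} applied to the perturbation $\epsilon\phi$ gives the Gateaux limit $\mu(\langle\phi,D^Lf(\mu)\rangle)$) is correct, as is the reduction via $\phi(x)=\mathbb{E}[Y\mid X=x]$ and the tower property, and the passage from \eqref{d} to \eqref{n} by Cauchy--Schwarz. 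You have correctly isolated the entire difficulty in the single claim $f(\mathscr{L}_{X+\epsilon Y})-f(\mathscr{L}_{X+\epsilon\phi(X)})=o(\epsilon)$.

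That claim, however, is precisely the substance of the lemma, and neither of your two sketches closes it. The interpolation route via $\mu_t:=\mathscr{L}_{X+tY}$ is circular as described: to differentiate $t\mapsto f(\mu_t)$ you need formula \eqref{d} at the intermediate laws $\mu_t$ in the direction $Y$, which is generally \emph{not} $\sigma(X+tY)$-measurable --- i.e.\ you need the very statement being proved, at a family of nearby measures. The second route is worse: a general $Y\in L^2(\Omega\to\R^d,\P)$ cannot be approximated in $L^2$ by $\sigma(X)$-measurable functions unless it already is $\sigma(X)$-measurable; the component of $Y$ orthogonal to $L^2(\sigma(X))$ is exactly what must be shown to contribute only $o(\epsilon)$, and approximating it away assumes the conclusion. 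The actual proof in [R2] uses the atomless hypothesis in an essential, constructive way (to realize auxiliary random variables with prescribed joint laws and transfer the Fr\'echet estimate \eqref{eq1.1}, which a priori only controls perturbations of the form $\psi(X)$, to the perturbation $\epsilon Y$); your proposal invokes atomlessness only as a slogan. So the architecture is right and the easy parts are done, but the one genuinely hard step is asserted rather than proved, and the two hints you give for it would not work as stated. A minor additional point: the bound in \eqref{n} follows from Cauchy--Schwarz only under a reading of $\mathbb{E}\|D^Lf(\mu)\|$ as the $L^2(\mu)$-norm (or with an a.e.\ bound on $\|D^Lf(\mu)(\cdot)\|$); you flag this, which is fair, since the paper's notation is loose here.
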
  
  The existence and uniqueness of solution to \eqref{eq1.2} has been proved in \cite{W}. The following Lemma gives the uniformly $p$-th moment estimates of solutions to \eqref{eq1.2} and \eqref{eq1.6}. 
 \begin{lem}\label{lem3.5}
 Under assumption ({\bf H1}), for $X_0^0=X_0^\ep=x\in\R^d$, we have
 \begin{equation}\label{eq3.8}
 \E\Big(\sup_{0\le t\le T}|X_t^\ep|^p\Big)\vee\Big(\sup_{0\le t\le T}|X_t^0|^p\Big)<\8,~~~~~p\ge 2.
 \end{equation}
 \end{lem}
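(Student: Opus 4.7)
The proof naturally splits into two pieces: the deterministic estimate for $X^0$ and the stochastic estimate for $X^\ep$. For $X^0$, observe that by \eqref{eq1.3}, \eqref{eq1.4} and the triangle inequality
$$|b_t(y,\delta_y)| \le K(t)(1 + 2|y|), \qquad t\ge 0,\ y\in\R^d,$$
so the ODE \eqref{eq1.6} gives $|X_t^0| \le |x| + \int_0^t K(s)(1+2|X_s^0|)\,\d s$, and Gronwall's inequality yields $\sup_{t\le T}|X_t^0| \le C(T,|x|)$. Raising to the $p$-th power is immediate since the bound is deterministic.

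For $X^\ep$, the plan is the standard Itô/BDG/Gronwall scheme, with the distributional dependence handled via Jensen's inequality. To make every step rigorous at the outset, introduce the stopping times $\tau_n := \inf\{t\ge 0 : |X_t^\ep|\ge n\}$ and work on $[0, t\wedge \tau_n]$. Apply Itô's formula to $(1+|X_{t\wedge\tau_n}^\ep|^2)^{p/2}$ (avoiding singularity of the derivative at the origin) and take $\sup_{s\le t}$ followed by expectation. The drift and quadratic-variation terms are controlled pointwise by (H1) and the Cauchy-Schwarz inequality. For the martingale piece, BDG yields a bound of the form
$$\E\sup_{s\le t\wedge\tau_n}|M_s| \le C\,\E\Bigl(\int_0^{t\wedge\tau_n}(1+|X_s^\ep|^2)^{p-1}\|\si_s(X_s^\ep,\L_{X_s^\ep})\|^2\d s\Bigr)^{1/2},$$
and Young's inequality absorbs the factor $\sup_{s\le t\wedge\tau_n}(1+|X_s^\ep|^2)^{p/2}$ into the left-hand side with coefficient $\tfrac12$.

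The key step in closing the loop is the following observation: by (H1),
$$\|\si_s(X_s^\ep,\L_{X_s^\ep})\|^2 \le C K(s)^2\bigl(1+|X_s^\ep|^2 + \W_2(\L_{X_s^\ep},\dd_{\bf 0})^2\bigr),$$
and since $\W_2(\L_{X_s^\ep},\dd_{\bf 0})^2 \le \E|X_s^\ep|^2$, Jensen's inequality (using $p\ge 2$) gives
$$\W_2(\L_{X_s^\ep},\dd_{\bf 0})^p \le (\E|X_s^\ep|^2)^{p/2} \le \E|X_s^\ep|^p \le \E\sup_{r\le s}|X_{r\wedge\tau_n}^\ep|^p.$$
Combining everything produces an inequality of the form
$$\E\sup_{s\le t\wedge\tau_n}|X_s^\ep|^p \le C\Bigl(1 + \int_0^t \E\sup_{r\le s\wedge\tau_n}|X_r^\ep|^p\,\d s\Bigr),$$
to which Gronwall applies, giving a bound uniform in $n$ (and in $\ep$ as long as $\ep$ stays bounded, since $\sqrt\ep \le 1$ absorbs into the constant). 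Letting $n\to\8$ and invoking Fatou's lemma finishes the proof.

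The only genuinely nontrivial point is the distribution-dependent coefficient: a pathwise Gronwall argument cannot be run directly because $\W_2(\L_{X_s^\ep},\dd_{\bf 0})$ is not controlled by $|X_s^\ep(\omega)|$ for a fixed $\oo$. Taking expectations before Gronwall, and exploiting the Jensen chain $\W_2(\L_{X_s^\ep},\dd_{\bf 0})^p \le \E\sup_{r\le s}|X_r^\ep|^p$, is precisely what sidesteps this obstacle and allows the argument to close.
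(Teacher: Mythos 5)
Your overall scheme is the same as the paper's (linear growth from ({\bf H1}), the bound $\W_2(\mathscr{L}_{X_s^\ep},\delta_{\bf 0})^2\le\E|X_s^\ep|^2$, BDG, Gronwall), but the step you yourself single out as ``the key step in closing the loop'' contains a genuine error: the inequality
\begin{equation*}
\E|X_s^\ep|^p\;\le\;\E\sup_{r\le s}|X_{r\wedge\tau_n}^\ep|^p
\end{equation*}
is false in general, since on the event $\{\tau_n<s\}$ the stopped supremum is capped at $n$ while $|X_s^\ep|$ can be arbitrarily large. This is not a cosmetic slip: the whole reason you introduced $\tau_n$ is that $\E|X_s^\ep|^p$ is not yet known to be finite, and the distribution-dependence defeats the localization, because the coefficient $\sigma_s(X_s^\ep,\mathscr{L}_{X_s^\ep})$ involves the law of the \emph{unstopped} process, which the stopping time does not control. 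Fatou only gives $\E|X_s^\ep|^2\le\liminf_n\E|X_{s\wedge\tau_n}^\ep|^2$, which is the wrong direction, so the localized Gronwall inequality you write down cannot be derived along the route you indicate.

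The standard repair keeps your structure intact. The well-posedness result quoted from \cite{W} produces a solution with $\mathscr{L}_{X_t^\ep}\in\mathscr{P}_2(\R^d)$ and $\sup_{0\le t\le T}\E|X_t^\ep|^2<\8$ (this is part of the solution concept there), so on $[0,T]$ the quantity $\W_2(\mathscr{L}_{X_s^\ep},\delta_{\bf 0})^p\le(\E|X_s^\ep|^2)^{p/2}$ is a bounded \emph{deterministic} function; it enters the stopped Gronwall inequality only through an additive constant, the resulting bound on $\E\sup_{s\le t\wedge\tau_n}|X_s^\ep|^p$ is uniform in $n$, and Fatou then yields \eqref{eq3.8}. (Equivalently, one can bootstrap: once $\sup_t\E|X_t^\ep|^2$ is in hand, your Jensen chain is unnecessary.) The paper's own one-line proof bounds the Wasserstein term by $(\E|X_s^\ep|^2)^{p/2}\le\E|X_s^\ep|^p$ without localizing at all, so it implicitly leans on the same a priori moment information; your attempt to be more rigorous is welcome, but as written it trades one unproved finiteness for a false inequality. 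The deterministic estimate for $X^0$ and the It\^o/BDG/Young mechanics for $X^\ep$ are fine.
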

 \begin{proof}
 It is easy to get from ({\bf H1}),
 \begin{equation}\label{eq3.9}
 |b_t(x,\mu)|\vee\|\sigma_t(x,\mu)\|\le K(t)(1+|x|+\W_2(\mu,\delta_0)).
 \end{equation}
 
 Noting that $\W_2(\mathscr{L}_{X_s^\ep},\delta_0)^p\le(\E|X_s^\ep|^2)^{p/2}$, by the Burkholder-Davis-Gundy (BDG for short) inequality and \eqref{eq3.9}, one has
 \begin{equation*}
 \begin{split}
 \E\Big(\sup_{0\le t\le T}|X_t^\ep|^p\Big)
 &\le 3^{p-1}x^{p}+C(T,p)\E\int_0^T(1+|X_s^\ep|^p)\d s,
 \end{split}
 \end{equation*} 
 thus, the desired assertion follows from Gronwall's inequality.
 \end{proof} 
{\bf Proof of Theorem \ref{CLT}}
 \begin{proof}
 For notation brevity, we set $Z_\cdot^\ep:=\frac{X_\cdot^\ep-X_\cdot^0}{\ss\ep}$, then 
 \begin{align}\label{Z}
 \d Z_t^\ep=\frac{1}{\ss\ep}(b_t(X_t^\ep,\mathscr{L}_{X_t^\ep})-b_t(X_t^0,\delta_{X_t^0}))\d t+\sigma_t(X_t^\ep,\mathscr{L}_{X_t^\ep})\d W_t.
 \end{align}
 We are going to prove $\lim_{\ep\rightarrow0}\E\Big(\sup_{0\le t\le T}|Z_t^\ep-Z_t|^p\Big)=0$. To this end, we claim that 
 \begin{align}\label{X}
 \E\Big(\sup_{0\le t\le T}|X_t^\ep-X_t^0|^p\Big)\le C(T,p).
 \end{align}
 Notice that
\begin{align*}
&\E\Big(\sup_{0\le t\le T}|X_t^\ep-X_t^0|^p\Big)\\
&\le2^{p-1}\Big\{\E\Big|\int_0^T(b_s(X_s^\ep,\mathscr{L}_{X_s^\ep})-b_s(X_s^0,\delta_{X_s^0}))\d s\Big|^p+\ep^{p/2}\E\Big(\sup_{0\le t\le T}\Big|\int_0^t\sigma_s(X_s^\ep,\mathscr{L}_{X_s^\ep})\d W_s\Big|^p\Big\}\\
&=:2^{p-1}(I_1(T)+I_2(T)).
\end{align*}
With Lemma \ref{lem3.5} in hand, we know the boundedness of  $p$-th moment of $X_t^\ep$ and $X_t^0, t\in[0,T]$ and the coefficient $b$ is $L$-differentiable at $\mathscr{L}_{X_\cdot^0}$.  By Lemma \ref{lem1},  \eqref{eq1.4} and assumption ({\bf H2}), we have
 \begin{align*}
 I_1(T)&\le(2T)^{p-1}\Big\{\int_0^T\E|(b_s(X_s^\ep,\mathscr{L}_{X_s^\ep})-b_s(X_s^0,\mathscr{L}_{X_s^\ep}))|^p\d s\\
&~~ +\int_0^T\E|(b_s(X_s^0,\mathscr{L}_{X_s^\ep})-b_s(X_s^0,\delta_{X_s^0}))|^p\d s
\Big\}\\
&\le(2T)^{p-1}\Big\{\int_0^T\E\Big|\int_0^1\nabla b_s(R_s^\ep(r),\mathscr{L}_{X_s^\ep})|X_s^\ep-X_s^0|\d r\Big|^p\d s\\
&~~+\E\Big|\int_0^1\langle D^Lb_s(y,\cdot)(\mathscr{L}_{R_s^\ep(r)})(R_s^\ep(r)),X_s^\ep-X_s^0\rangle|_{y=X_s^0}\d r\Big|^p\d s\Big\} \\
&\le C(T,p)\int_0^T\E|X_s^\ep-X_s^0|^p\d s,
\end{align*}
where $R_s^\ep(r)=X_s^0+r(X_s^\ep-X_s^0), r\in[0,1]$.

By the assumption ${(\bf H1)}$ and BDG's inequality, we get
\begin{align*}
 I_2(T)
 &\le \ep^{p/2}C(T,p)\Big(\int_0^T\big(\E|\sigma_s(X_s^\ep,\mathscr{L}_{X_s^\ep})-\sigma_s(0,\delta_0)|^2+\E|\sigma_s(0,\delta_0)|^2\big)\d s\Big)^{p/2}\\
 &\le \ep^{p/2}C(T,p)\Big(\int_0^T(\E|X_s^\ep|^2+K^2(s))\d s\Big)^{p/2}\\
 &\le\ep^{p/2}C(T,p)(1+\int_0^T\E|X_s^\ep|^p\d s),
 \end{align*}
 where the third inequality due to the fact that $\W_2(\mathscr{L}_{X_s^\ep},\delta_0)^2\le \E|X_s^\ep|^2$.
 
The claim  follows by combining the above the estimates,  \eqref{eq3.8}  and the Gronwall inequality.

 By the definitions of $Z_t^\ep$ and  $Z_t$, we derive that 
 \begin{align*}
 Z_t^{\ep}-Z_t&=\int_0^t\Big(\frac{1}{\ss\ep}(b_s(X_s^\ep,\mathscr{L}_{X_s^\ep})-b_s(X_s^0,\mathscr{L}_{X_s^\ep}))-
 \nabla_{Z_s^\ep}b_s(X_s^0,\mathscr{L}_{X_s^\ep})
 \Big)\d s\\
 &~~+\int_0^t\Big(\frac{1}{\ss\ep}(b_s(X_s^0,\mathscr{L}_{X_s^\ep})-b_s(X_s^0,\delta_{X_s^0}))-\E
 \langle D^Lb_s(y,\cdot)(\delta_{X_s^0})(X_s^0), Z_s^\ep\rangle|_{y=X_s^0} \Big)\d s\\
&~~+\int_0^t(\sigma_s(X_s^\ep,\mathscr{L}_{X_s^\ep})-\sigma_s(X_s^0,\delta_{X_s^0}))\d W_s\\
&~~+\int_0^t(\nabla_{Z_s^\ep}b_s(X_s^0,\mathscr{L}_{X_s^\ep})-\nabla_{Z_s}b_s(X_s^0,\delta_{X_s^0}))\d s\\
&~~+\int_0^t(\E\langle D^Lb_s(y,\cdot)(\delta_{X_s^0})(X_s^0), Z_s^\ep\rangle|_{y=X_s^0}-\E\langle D^Lb_s(y,\cdot)(\delta_{X_s^0})(X_s^0), Z_s\rangle|_{y=X_s^0}
)\d s.
\end{align*}
By ({\bf H2}), \eqref{d}, H\"older's inequality and BDG's inequality, we have
 \begin{align}\label{appr}
 &\E\Big(\sup_{0\le t\le T}|Z_t^{\ep}-Z_t|^p\Big)\\\nonumber
 &\le C(T,p)\int_0^T\E
 \Big|\int_0^1\nabla_{Z_s^\ep}b_s(R_s^\ep(r),\mathscr{L}_{X_s^\ep})\d r-\nabla_{Z_s^\ep}b_s(X_s^0,\mathscr{L}_{X_s^\ep})\Big|^p\d s\\\nonumber
 &~~+C(T,p)
 \int_0^T\E\Big|\int_0^1\langle D^Lb_s(y,\cdot)(\mathscr{L}_{R_s^\ep(r)})(R_s^\ep(r)), Z_s^\ep\rangle|_{y=X_s^0}\d r\\\nonumber
 &-\langle D^Lb_s(y,\cdot)(\mathscr{L}_{X_s^0})(X_s^0), Z_s^\ep\rangle|_{y=X_s^0}\Big|^p\d s
+C(T,p)\int_0^T\E|\sigma_s(X_s^\ep,\mathscr{L}_{X_s^\ep})-\sigma_s(X_s^0,\mathscr{L}_{X_s^0})|^p\d s\\\nonumber
&~~+C(T,p)\int_0^T\Big(\E|\nabla_{Z_s^\ep-Z_s}b_s(X_s^0,\mathscr{L}_{X_s^\ep})|^p+\E|\nabla_{Z_s}b_s(X_s^0,\mathscr{L}_{X_s^\ep})-\nabla_{Z_s}b_s(X_s^0,\delta_{X_s^0})|^p\Big)\d s\\\nonumber
&~~+C(T,p)\int_0^T\Big|\E\langle D^Lb_s(y,\cdot)(\delta_{X_s^0})(X_s^0), Z_s^\ep\rangle|_{y=X_s^0}-\E\langle D^Lb_s(y,\cdot)(\delta_{X_s^0})(X_s^0), Z_s\rangle|_{y=X_s^0}\Big|^p\d s\\ \nonumber
 &\le C(T,p)\ep^{p/2}\int_0^T\E|Z_s^{\ep}|^{2p}\d s
+C(T,p)\int_0^T\E\Big(|Z_s^\ep|\int_0^1\W_2(\mathscr{L}_{R_s^\ep(r)},\delta_{X_s^0})\d r\Big)^p\d s\\\nonumber
&~~+C(T,p)\int_0^T(\ep^{p/2}\E|Z_s^\ep|^p+\E\W_2(\mathscr{L}_{X_s^\ep},\delta_{X_s^0})^p)\d s\\\nonumber
&~~+C(T,p)\int_0^T\Big(\E|Z_s^\ep-Z_s|^p+\E|Z_s|^p\W_2(\mathscr{L}_{X_s^\ep},\delta_{X_s^0})^p\Big)\d s\\\nonumber
&\le C(T,p)\ep^{p/2}+C(T,p)\int_0^T\E|Z_s^\ep-Z_s|^p\d s,
 \end{align}
where $R_s^\ep(r)=X_s^0+r(X_s^\ep-X_s^0), r\in[0,1]$, and we used $\int_0^1\W_2(\mathscr{L}_{R_s^\ep(r)},\delta_{X_s^0})\d r\le\frac{\ss\ep}{2}(\E|Z_s^\ep|^2)^{1/2}$,  $\W_2(\mathscr{L}_{X_s^\ep},\delta_{X_s^0})\le\ep^{1/2}(\E |Z_s^\ep|^2)^{1/2}$ and \eqref{X} in the last inequality.
  
By the Gronwall inequality, it yields from \eqref{appr} that
\begin{align*}
\E\Big(\sup_{0\le t\le T}|Z_t^{\ep}-Z_t|^p\Big)\le C_{T,p}\ep^{p/2}.
\end{align*}
The desired assertion is obtained by taking $\ep\rightarrow0$.
 \end{proof}
 
\section{Proof of Theorem \ref{thm2}} 

From \eqref{eq1.2}, \eqref{eq1.6}, \eqref{eq1.7}, we can see that $\bar{X}^\ep$ satisfies the following equation:
 \begin{equation}\label{eq3.3}
 \bar{X}_t^\ep=\frac{1}{\ss\ep\lambda(\ep)}\int_0^t[b_s(X_s^\ep,\mathscr{L}_{X_s^\ep})-b_s(X_s^0,\delta_{X_s^0}) ]\d s
 +\frac{1}{\lambda(\ep)}\int_0^t\sigma_s(X_s^\ep,\mathscr{L}_{X_s^\ep})\d W_s.
 \end{equation}
 Notice that the law of $X_t^0$ can be approximated by $\mathscr{L}_{X_t^\ep}$ as $\ep\rightarrow0$, we then define the following equation:
  \begin{equation}\label{app}
 \bar{Y}_t^\ep=\frac{1}{\ss\ep\lambda(\ep)}\int_0^t[b_s(Y_s^\ep,\delta_{X_s^0})-b_s(X_s^0,\delta_{X_s^0}) ]\d s
 +\frac{1}{\lambda(\ep)}\int_0^t\sigma_s(Y_s^\ep,\delta_{X_s^0})\d W_s,
 \end{equation}
 where $\d Y_t^\ep=b_t(Y_t^\ep,\delta_{X_t^0})\d t+\ss\ep\sigma_t(Y_t^\ep,\delta_{X_t^0})\d W_t$ and $\bar{Y}_t^\ep=\frac{Y_t^\ep-X_t^0}{\ss\ep\lambda(\ep)}$.

 We recall that the heuristics underlying large deviations theory is to identify a deterministic path around which the diffusion is concentrated with overwhelming probability, so that the stochastic motion can be seen as a small random perturbation of this deterministic path. This means in particular that the law of $\bar{X}_t^\ep$ is close to some Dirac mass if $\ep$ is small. We therefore proceed in two steps toward the aim of proving a large deviation principle for $\bar{X}^\ep$.
 In the first step, we replace the law $\mathscr{L}_{\bar{X}_t^\ep}$ of $\bar{X}_t^\ep$ by its suspected limit, that is, we approximate the law of $\bar{X}_t^\ep$. In this way, we avoid the difficulty of the dependence on the law of $\bar{X}^\ep$, thus, we obtain a diffusion which is defined by  means of a classical SDE. In the second step, we prove that this diffusion is exponentially equivalent to $\bar{X}^\ep$. Since LDPs does not distinguish between exponentially equivalent families, we deduce that $\bar{X}^\ep$ satisfies an LDP with the good rate function $I(g)$ given in \eqref{rate}.
 
 To make the content self-contained. In the following subsection, we give the sketch proof of LDP for $\bar{Y}^\ep$.
\subsection{Large deviation principle for $\bar{Y}^\ep$}
 \begin{lem}\label{app}
 Under the hypotheses of Theorem \ref{thm2}, the family of $(\bar{Y}^\ep)_{\ep>0}$ satisfies a large deviation principle in $C([0,T];\R^d)$ equipped with the topology of the uniform norm with the good rate function $I(g)$ given in \eqref{rate}.
 \end{lem}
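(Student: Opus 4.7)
I would apply the weak--convergence LDP criterion of Lemma \ref{thm1} at the MDP scale $1/\lambda^2(\ep)$. Since $Y^\ep$ solves a classical SDE (its measure argument is frozen at the deterministic $\delta_{X_t^0}$), there is a measurable solution map $\Gamma^\ep: C([0,T];\R^d)\to C([0,T];\R^d)$ with $\bar Y^\ep = \Gamma^\ep(W_\cdot)$; the candidate skeleton is $\Gamma^0\bigl(\int_0^\cdot\dot h(s)\d s\bigr) = Y^h$, the unique solution of the linear equation \eqref{eq3.1}. The MDP--scaled shift of the criterion replaces $W_\cdot$ by $W_\cdot + \lambda(\ep)\int_0^\cdot\dot h_\ep(s)\d s$, and inspection of the driving SDE yields $\Gamma^\ep\bigl(W_\cdot + \lambda(\ep)\int_0^\cdot \dot h_\ep(s)\d s\bigr) = \bar Y^{\ep,h_\ep} := (Y^{\ep,h_\ep}-X^0)/(\sqrt\ep\,\lambda(\ep))$, where
\begin{equation*}
\d \bar Y^{\ep,h_\ep}_t = \frac{b_t(Y^{\ep,h_\ep}_t,\delta_{X_t^0})-b_t(X_t^0,\delta_{X_t^0})}{\sqrt\ep\,\lambda(\ep)}\d t + \sigma_t(Y^{\ep,h_\ep}_t,\delta_{X_t^0})\dot h_\ep(t)\d t + \frac{1}{\lambda(\ep)}\sigma_t(Y^{\ep,h_\ep}_t,\delta_{X_t^0})\d W_t.
\end{equation*}

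For condition (a), I would fix $h_\ep \in \mathscr A_N$ converging in distribution to $h\in S_N$ and first combine (H1) with Gronwall's inequality and the uniform control $\int_0^T|\dot h_\ep|^2\d s\le N$ to obtain (i) uniform $L^p$--moment bounds for $Y^{\ep,h_\ep}$, (ii) $\sup_{t\le T}|Y^{\ep,h_\ep}_t-X_t^0|\to 0$ in probability (using $\sqrt\ep\,\lambda(\ep)\to 0$), and (iii) uniform $L^p$--boundedness and tightness of $\{\bar Y^{\ep,h_\ep}\}$ in $C([0,T];\R^d)$. Under (H2), the drift difference Taylor--expands as
\begin{equation*}
\frac{b_t(Y^{\ep,h_\ep}_t,\delta_{X_t^0})-b_t(X_t^0,\delta_{X_t^0})}{\sqrt\ep\,\lambda(\ep)} = \Bigl(\int_0^1 \nabla b_t(\cdot,\delta_{X_t^0})(R_t^\ep(r))\d r\Bigr)\bar Y^{\ep,h_\ep}_t,\quad R_t^\ep(r)=X_t^0+r(Y^{\ep,h_\ep}_t-X_t^0),
\end{equation*}
and the Lipschitz regularity of $\nabla b$ lets one replace $R_t^\ep(r)$ by $X_t^0$ with vanishing error. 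The martingale term vanishes uniformly on $[0,T]$ by BDG together with $\lambda(\ep)\to\infty$, while $\dot h_\ep \rightharpoonup \dot h$ weakly in $L^2$ and continuity of $\sigma_t(X_t^0,\delta_{X_t^0})$ deliver the limit $\int_0^\cdot \sigma_s(X_s^0,\delta_{X_s^0})\dot h(s)\d s$ for the control term. Any weak limit of $\bar Y^{\ep,h_\ep}$ therefore satisfies the linear equation \eqref{eq3.1}; uniqueness identifies it with $Y^h$.

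For condition (b), under (H2) the coefficients of \eqref{eq3.1} are bounded by $K(t)$. Gronwall gives $\sup_{h\in S_N}\|Y^h\|_\infty \le C(N,T)$, and Cauchy--Schwarz on $\int_s^t \sigma_u(X_u^0,\delta_{X_u^0})\dot h(u)\d u$ yields $\frac{1}{2}$--Hölder equicontinuity uniformly in $h\in S_N$; Arzelà--Ascoli combined with continuity of the solution map $h\mapsto Y^h$ under the weak topology of $S_N$ then produces compactness of the image. The conclusion of the lemma follows at once from Lemma \ref{thm1}, the rate function being \eqref{rate}.

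The hard part will be the passage to the limit in the quadratic--type term obtained after linearization: the factor $\int_0^1 \nabla b_t(\cdot,\delta_{X_t^0})(R_t^\ep(r))\d r$ depends on the randomness through $Y^{\ep,h_\ep}$ and only converges in probability, while $\bar Y^{\ep,h_\ep}$ itself converges only weakly (in distribution) and is multiplied by it. I would close this step by Skorokhod representation, the Lipschitzness of $\nabla b$ from (H2), and Gronwall applied to the resulting linear limit equation, making sure that the uniform moment bounds on $\bar Y^{\ep,h_\ep}$ are not lost when taking expectations against the randomness carried by the controls $h_\ep$.
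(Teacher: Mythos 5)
Your proposal follows essentially the same route as the paper: the weak-convergence criterion of Lemma \ref{thm1} at speed $\lambda^2(\ep)$, the Girsanov-shifted equation \eqref{eq3.5} for $\bar Y^{\ep,h_\ep}$, a uniform second-moment bound on $\bar Y^{\ep,h_\ep}$, condition (b) via boundedness, Arzel\`a--Ascoli and continuity of $\Gamma^0$ on $S_N$ with the weak topology, and condition (a) via the Skorokhod representation and Gronwall. The only cosmetic difference is that the paper estimates $\E\bigl(\sup_{0\le t\le T}|\bar Y_t^{\ep,h_\ep}-Y_t^h|^2\bigr)$ directly by a Gronwall argument instead of passing through tightness and identification of weak limits, which is precisely the fallback you describe in your closing paragraph.
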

According to the Lemma \ref{thm1},  in order to prove Lemma \ref{app}, we only need to verify the conditions (a) and  (b) in Lemma \ref{thm1}. 

 By the Yamada-Watanabe theorem, there exists a measurable map $\Gamma^\ep:C([0,T];\R^d)\rightarrow C([0,T];\R^d)$ such that 
 $\bar{Y}_\cdot^\ep=\Gamma^\ep\Big(\frac{1}{\lambda(\ep)}W_\cdot\Big)$.
 
Since $\E_\P\Big(\exp\big\{\frac{1}{2}\int_0^T|\dot{h}_\ep(s))|^2\d s\big\}\Big)<\8, h_\ep\in\mathscr{A}_N$, that is, if $h_\ep\in\mathscr{A}_N,$ then the Novikov's condition holds. By the Girsanov theorem, 
we know that
\begin{align*}
\frac{1}{\lambda(\ep)}\tilde{W}_t=\frac{1}{\lambda(\ep)}W_t+\int_0^t\dot{h}_\ep(s)\d s
\end{align*}
is a Brownian motion under the probability measure $\P_\ep:=R_T\P$, where 
\begin{align*}
R_T=\exp{\Big\{-\int_0^T\dot{h}_\ep(s)\d \frac{W_s}{\lambda(\ep)}-\frac{1}{2}\int_0^T|\dot{h}_\ep(s)|^2\d s\Big\}}
\end{align*}
is a martingale.

Furthermore, we obtain that
$\bar{Y}_\cdot^{\ep,h_\ep}=\Gamma^\ep\Big(\frac{1}{\lambda(\ep)}W_\cdot+\int_0^\cdot \dot{h}_\ep(s)\d s\Big)$, 
 which solves
\begin{align}\label{eq3.5}
 \d \bar{Y}_t^{\ep,h_\ep}&=\frac{1}{\ss\ep\lambda(\ep)}[b_t(Y_t^{\ep,h_\ep},\delta_{X_t^{0}})-b_t(X_t^0,\delta_{X_t^{0}}) ]\d t\\\nonumber
 &~~+\frac{1}{\lambda(\ep)}\sigma_t(Y_t^{\ep,h_\ep},\delta_{X_t^{0}})\d W_t
 +\sigma_t(Y_t^{\ep,h_\ep},\delta_{X_t^{0}})\dot{h}_\ep(t)\d t,
\end{align}
where $Y_t^{\ep,h_\ep}:=X_t^0+\ss\ep\lambda(\ep)\bar{Y}_t^{\ep,h_\ep}$.

The following Lemmas play the key roles in the proof of Lemma \ref{app}.
 
 \begin{lem}\label{lem3.1}
 Under Assumptions {\rm({\bf H1})} and {\rm ({\bf H2})}, for any $h\in \H$, equation \eqref{eq3.1} admits a unique solution $Y_\cdot^h$ in $C([0,T];\R^d)$.
 Moreover, for any $N>0$, there exists a constant $C_{N,T}$ such that 
 \begin{equation}\label{eq3.2}
 \sup_{h\in S_N}\Big\{\sup_{0\le t\le T}|Y_t^h|\Big\}\le C_{N,T}.
 \end{equation}
 \end{lem}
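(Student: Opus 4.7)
The plan is to treat equation \eqref{eq3.1} as a linear inhomogeneous ODE with deterministic, time-dependent coefficients, so that existence/uniqueness reduce to standard Picard iteration and the uniform bound to Gronwall's lemma.

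First I would set $A(t):=\nabla b_t(\cdot,\delta_{X_t^0})(X_t^0)\in\R^{d\otimes d}$ and $f_h(t):=\sigma_t(X_t^0,\delta_{X_t^0})\dot{h}(t)\in\R^d$, so that \eqref{eq3.1} takes the form $\dot Y_t^h=A(t)Y_t^h+f_h(t)$, $Y_0^h={\bf 0}$. The path $X_\cdot^0$ is the deterministic solution of \eqref{eq1.6}, so by Lemma \ref{lem3.5} (applied to the trivially random solution) $\sup_{0\le t\le T}|X_t^0|\le C_T<\infty$. By (\textbf{H2}) we then have $\|A(t)\|\le K(t)$, while (\textbf{H1}) combined with \eqref{eq3.9} and the boundedness of $X^0$ gives $\|\sigma_t(X_t^0,\delta_{X_t^0})\|\le K(t)(1+|X_t^0|)\le C_T K(t)$. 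Since $K$ is increasing it is locally bounded on $[0,T]$, and for $h\in S_N$ Cauchy--Schwarz yields
\begin{equation*}
\int_0^T|f_h(s)|\d s\le \Big(\int_0^T\|\sigma_s(X_s^0,\delta_{X_s^0})\|^2\d s\Big)^{1/2}\Big(\int_0^T|\dot h(s)|^2\d s\Big)^{1/2}\le C_T\ss{N}.
\end{equation*}
Hence $f_h\in L^1([0,T];\R^d)$, and a standard Picard iteration on the integral version $Y_t^h=\int_0^t A(s)Y_s^h\d s+\int_0^t f_h(s)\d s$ gives a unique $Y^h\in C([0,T];\R^d)$; equivalently one can write it through the variation-of-constants formula $Y_t^h=\int_0^t\Phi(t,s)f_h(s)\d s$ where $\Phi$ is the fundamental matrix associated with $A$.

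For the uniform bound, take absolute values in the integral form to obtain
\begin{equation*}
|Y_t^h|\le \int_0^t K(s)|Y_s^h|\d s+\int_0^t\|\sigma_s(X_s^0,\delta_{X_s^0})\||\dot h(s)|\d s\le C_T\ss{N}+\int_0^t K(s)|Y_s^h|\d s,
\end{equation*}
for every $h\in S_N$. Gronwall's inequality with kernel $K$ then yields
\begin{equation*}
\sup_{0\le t\le T}|Y_t^h|\le C_T\ss{N}\exp\Big(\int_0^T K(s)\d s\Big)=:C_{N,T},
\end{equation*}
uniformly in $h\in S_N$, which is the claimed estimate \eqref{eq3.2}.

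There is no real obstacle here: because the original McKean--Vlasov nonlinearity has been linearised around $X^0$ and the law dependence frozen to $\delta_{X_t^0}$, all the distribution-dependent difficulties have disappeared, and what remains is a textbook linear ODE argument. The only points that deserve care are verifying measurability and local integrability of $t\mapsto A(t)$ and $t\mapsto \sigma_t(X_t^0,\delta_{X_t^0})$, both of which follow immediately from the joint continuity statements in (\textbf{H1}) and (\textbf{H2}) together with the continuity of $X^0$.
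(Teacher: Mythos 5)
Your argument is correct and follows essentially the same route as the paper's (the paper merely notes that by (\textbf{H1})--(\textbf{H2}) the coefficients of \eqref{eq3.1} are Lipschitz and of linear growth, so existence/uniqueness is standard and \eqref{eq3.2} follows from Gronwall, omitting details). Your write-up simply fills in those omitted details --- the bounds $\|\nabla b_t(\cdot,\delta_{X_t^0})(X_t^0)\|\le K(t)$ and $\|\sigma_t(X_t^0,\delta_{X_t^0})\|\le C_TK(t)$, the Cauchy--Schwarz estimate on the inhomogeneous term over $S_N$, Picard iteration, and Gronwall --- all of which are sound.
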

 \begin{proof}
 By ({\bf H1}) and  ({\bf H2}),  the coefficients of \eqref{eq3.1} satisfy the Lipschitz condition,  therefore equation \eqref{eq3.1} admits a unique solution. Moreover, noting   the coefficient functions satisfy the linear growth condition and  the fact that $\W_2(\mathscr{L}_{Y_t^h},\delta_0)^2\le|Y_t^h|^2$, we can obtain the estimate \eqref{eq3.2} by using the Gronwall inequality. Here we omit the details of the proof.
 \end{proof}
 Firstly, we prove that the condition (b) of Lemma \ref{thm1} holds.
 \begin{lem}\label{lem3.2}
 Under assumptions {\rm({\bf H1})} and {\rm ({\bf H2})}, for any positive number $N<\8$, 
 the family 
 $$K_N:=\Big\{\Gamma^0\Big(\int_0^\cdot\dot{h}(s)\d s\Big); h\in S_N\Big\},$$ 
 is compact in $C([0,T];\R^d)$, where the map $\Gamma^0$ is defined in Theorem \ref{thm2}.
 \end{lem}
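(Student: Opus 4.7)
The plan is to establish compactness of $K_N$ in $C([0,T];\R^d)$ by combining the Arzel\`a--Ascoli theorem with a sequential closedness argument that exploits the weak compactness of the ball $S_N$ in the Hilbert space $\H$.

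First I would show that $K_N$ is uniformly bounded and equicontinuous. Uniform boundedness is exactly estimate \eqref{eq3.2} from Lemma \ref{lem3.1}. For equicontinuity, using \eqref{eq3.1}, the bound $\|\nabla b_t(\cdot,\mu)(x)\|\le K(t)$ from (H2), the linear growth $\|\sigma_t(X_t^0,\delta_{X_t^0})\|\le K(t)(1+|X_t^0|)$, and Cauchy--Schwarz, I obtain for $0\le t_1<t_2\le T$
\begin{equation*}
|Y_{t_2}^h-Y_{t_1}^h|\le\int_{t_1}^{t_2}K(s)|Y_s^h|\,\d s+\Bigl(\int_{t_1}^{t_2}\|\sigma_s(X_s^0,\delta_{X_s^0})\|^2\,\d s\Bigr)^{1/2}\ss N\le C_{N,T}|t_2-t_1|^{1/2},
\end{equation*}
uniformly in $h\in S_N$. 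Arzel\`a--Ascoli then yields relative compactness of $K_N$.

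Next I would verify closedness. Take an arbitrary sequence $Y^{h_n}\in K_N$ converging uniformly to some $Y$. By weak sequential compactness of $S_N$ (a closed ball in $\H$), pass to a subsequence, still denoted $h_n$, with $h_n\rightharpoonup h$ in $\H$; since $S_N$ is weakly closed, $h\in S_N$. It remains to identify $Y=Y^h$, which I would do by showing $Y^{h_n}\to Y^h$ uniformly. From \eqref{eq3.1},
\begin{equation*}
Y_t^{h_n}-Y_t^h=\int_0^t\nabla_{Y_s^{h_n}-Y_s^h}b_s(\cdot,\delta_{X_s^0})(X_s^0)\,\d s+G_n(t),\quad G_n(t):=\int_0^t\sigma_s(X_s^0,\delta_{X_s^0})(\dot h_n(s)-\dot h(s))\,\d s,
\end{equation*}
so by the linear bound on $\nabla b$ and Gronwall's inequality, $\sup_{0\le t\le T}|Y_t^{h_n}-Y_t^h|\le C_T\sup_{0\le t\le T}|G_n(t)|$.

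The main obstacle, and essentially the only nontrivial analytic step, is to prove $\sup_{0\le t\le T}|G_n(t)|\to0$ under the weak convergence $\dot h_n\rightharpoonup\dot h$ in $L^2([0,T];\R^d)$. For each fixed $t$ the integrand $s\mapsto\sigma_s(X_s^0,\delta_{X_s^0})\mathbf 1_{[0,t]}(s)$ lies in $L^2([0,T];\R^{d\otimes d})$ since $X^0$ is deterministic and continuous by \eqref{eq1.6} and $\sigma$ has linear growth, so weak convergence gives $G_n(t)\to 0$ pointwise in $t$. The same Cauchy--Schwarz estimate used above for equicontinuity of $K_N$ shows that the family $\{G_n\}$ is equicontinuous with a modulus independent of $n$, because $\|\dot h_n-\dot h\|_{L^2}\le 2\ss N$. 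Pointwise convergence plus uniform equicontinuity on the compact interval $[0,T]$ upgrades to uniform convergence, and Gronwall then closes the argument, yielding $Y=Y^h\in K_N$ and hence compactness of $K_N$.
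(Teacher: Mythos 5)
Your proof is correct and takes essentially the same route as the paper: the paper establishes compactness by proving that $\Gamma^0$ is continuous from $S_N$ with the weak topology into $C([0,T];\R^d)$, using the same Gronwall reduction and the same pointwise-convergence-plus-equicontinuity (Arzel\`a--Ascoli) upgrade for the term $\int_0^\cdot\sigma_s(X_s^0,\delta_{X_s^0})(\dot h_n(s)-\dot h(s))\,\d s$, and then invokes weak compactness of $S_N$. Your packaging as relative compactness of $K_N$ plus closedness is equivalent; note only that your closedness argument by itself already yields sequential compactness of $K_N$, so the separate Arzel\`a--Ascoli step applied directly to $K_N$ is redundant.
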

 \begin{proof}
If the map $\Gamma^0$  is continuous from $S_N$ to $C([0,T];\R^d)$. Then for any $N<\8$, the fact that $K_N$ is compact follows from the compactness of $S_N$ and the continuity of the map $\Gamma^0$ from $S_N$ to $C([0,T];\R^d)$.
 
 In the sequel, we prove that $\Gamma^0$ is a continuous map from
 $S_N$ to $C([0,T];\R^d)$. Let $h_n\rightarrow h$  in $S_N$ as $n\rightarrow\8$. Then
 \begin{equation*}
 \begin{split}
 Y_t^{h_n}-Y_t^h&=\int_0^t\nabla_{\{Y_s^{h_n}-Y_s^h\}}b_s(\cdot,\delta_{X_s^0})(X_s^0)\d s
 +\int_0^t\sigma_s(X_s^0,\delta_{X_s^0})(\dot{h}_n(s)-\dot{h}(s))\d s\\
 &=:I_1^n(t)+I_2^n(t).
 \end{split}
 \end{equation*}
 By ({\bf H2}), \eqref{eq3.8} and \eqref{eq3.9}, it is easy to see that
 \begin{align*}
 |I_1^n(t)|\le\int_0^tK(s)(1+|X_s^0|+\W_2(\delta_{X_s^0},\delta_0))|Y_s^{h_n}-Y_s^h|\d s.
 \end{align*}
 
Let $g^n(t)=\int_0^t\sigma_s(X_s^0,\delta_{X_s^0})\dot{h}_n(s)\d s$.  By {\bf(H1)},  Lemma \ref{lem3.5}, and $h_n,h\in S_N$, we derive that
  \begin{align*}
 |g^n(t)|&\le\Big(\int_0^t\|\sigma_s(X_s^0,\delta_{X_s^0})\|^2\d s\Big)^{1/2}\Big(\int_0^t|\dot{h}_n(s)|^2\d s\Big)^{1/2}\\
&\le\Big(\int_0^tK^2(s)(1+|X_s^0|+W_2(\delta_{X_s^0},\delta_0))^2\d s\Big)^{1/2}\Big(\int_0^t|\dot{h}_n(s)|^2\d s\Big)^{1/2}\\
 &<\8. 
 \end{align*}
Similarly, we see that for any $0\le t_1\le t_2\le T$,
\begin{align*}
|g^n(t_2)-g^n(t_1)|&\le\int_{t_1}^{t_2} \|\sigma_s(X_s^0,\delta_{X_s^0})\||\dot{h}_n(s)|\d s\\
 &\le\int_{t_1}^{t_2}K(s)(1+|X_s^0|+W_2(\delta_{X_s^0},\delta_0))|\dot{h}_n(s)|\d s\\
 &\le C(T)(t_2-t_1)^{1/2}\Big(\int_{t_1}^{t_2}|\dot{h}_n(s)|^2\d s\Big)^{1/2}\\
 &\le C(T,N)(t_2-t_1)^{1/2}.
\end{align*}
 Hence, the family of function $\{g^n\}_{n\ge1}$ are equicontinuous in $C([0,T];\R^d)$.
  
 According to the Azel$\grave{a}$-Ascoli theorem, $\{g^n\}_{n\ge1}$ is relatively compact in $C([0,T];\R^d)$, let $g$ be any limit point of $\{g^n\}_{n\ge1}$. Noticing ${h}_n\rightarrow{h}$ on $S_N$, we have
 \begin{align*}
 \lim_{n\rightarrow\8}\int_0^t\sigma_s(X_s^0,\delta_{X_s^0})\dot{h}_n(s)\d s=\int_0^t\sigma_s(X_s^0,\delta_{X_s^0})\dot{h}(s)\d s,   \forall t\in[0,T],
 \end{align*}
 that is,
 $\lim_{n\rightarrow\8}\sup_{t\in[0,T]}|I_2^n(t)|=0$.
 This, together with \eqref{eq3.8},  yields that
 \begin{align*}
 \sup_{0\le t\le T}|Y_t^{h_n}-Y_t^h|&\le \int_0^TK(t)(1+|X_t^0|+\W_2(\delta_{X_t^0},\delta_0))|Y_t^{h_n}-Y_t^h|\d t+\sup_{0\le t\le T}I_2^n(t),
 \end{align*}
 by the Gronwall inequality, we arrive at 
 \begin{align*}
 \sup_{0\le t\le T}|Y_t^{h_n}-Y_t^h|&\le\exp{\Big\{\int_0^TK(t)(1+|X_t^0|+\W_2(\delta_{X_t^0},\delta_0))\d t\Big\}}\sup_{0\le t\le T}I_2^n(t)\\
 &\le C(T,N)\sup_{0\le t\le T}I_3^n(t)\rightarrow 0, \mbox{as}~~n\rightarrow\8.
 \end{align*}
 Thus we proved that the $\Gamma^0$ is a continuous map, 
 the proof is therefore completed.
 \end{proof}

 Before verify condition (a), we give an  estimate for the second moment  of $\bar{Y}_t^{\ep,h_\ep}$.
 \begin{lem}\label{lem3.3}
 For every fixed $N\in\N$, let $h_\ep\in\mathscr{A}_N$ and $\bar{Y}_\cdot^{\ep,h_\ep}$ be given by \eqref{eq3.5}. Then  we have
\begin{equation}\label{eq3.6}
\E\Big(\sup_{0\le t\le T}|\bar{Y}_t^{\ep,h_\ep}|^2\Big)\le C(N,T),
\end{equation}
where $C(N,T)$ is a constant which is independent of $\ep$. 
 \end{lem}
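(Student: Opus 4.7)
\textbf{Plan of proof for Lemma \ref{lem3.3}.} The plan is to decompose the right-hand side of \eqref{eq3.5} into three pieces, bound the $\sup$-squared moment of each, and then close up via Gronwall's inequality. The central bookkeeping issue will be to check that every appearance of the scaling factors $\ss\ep\lambda(\ep)$, $1/\lambda(\ep)$, and $\ep\lambda^2(\ep)$ produces a quantity that is bounded \emph{uniformly} in $\ep$ on $(0,1)$. This is exactly what the MDP scaling \eqref{eq1.8} is designed for: since $\ss\ep\lambda(\ep)\to 0$, we have $\ep\lambda^2(\ep)\to 0$, so $\sup_{\ep\in(0,1)}\ep\lambda^2(\ep)<\8$, and also $\lambda(\ep)\ge 1$ for $\ep$ small so that $1/\lambda^2(\ep)$ is bounded.

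First, for the drift generated by $b$, the relation $Y_s^{\ep,h_\ep}-X_s^0=\ss\ep\lambda(\ep)\bar Y_s^{\ep,h_\ep}$ together with the Lipschitz assumption ({\bf H1}) gives
\[
\frac{1}{\ss\ep\lambda(\ep)}\big|b_s(Y_s^{\ep,h_\ep},\delta_{X_s^0})-b_s(X_s^0,\delta_{X_s^0})\big|\le K(s)|\bar Y_s^{\ep,h_\ep}|,
\]
so by Cauchy--Schwarz this first piece contributes to $\E\sup_{0\le t\le T}|\bar Y_t^{\ep,h_\ep}|^2$ at most $T\|K\|_{L^2(0,T)}^2\int_0^T\E\sup_{0\le r\le s}|\bar Y_r^{\ep,h_\ep}|^2\d s$ in the usual Gronwall-ready form.

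Second, for the stochastic integral I apply BDG and use the linear-growth bound implied by ({\bf H1}), namely $\|\sigma_s(Y_s^{\ep,h_\ep},\delta_{X_s^0})\|\le K(s)(1+|X_s^0|+\ss\ep\lambda(\ep)|\bar Y_s^{\ep,h_\ep}|)$, combined with $\W_2(\delta_{X_s^0},\delta_0)=|X_s^0|$. This gives a contribution bounded by
\[
\frac{C}{\lambda^2(\ep)}\int_0^T K^2(s)\big(1+\E|X_s^0|^2\big)\d s+C\,\ep\int_0^T K^2(s)\,\E|\bar Y_s^{\ep,h_\ep}|^2\d s,
\]
and both prefactors are bounded in $\ep$; the first integral is finite by Lemma \ref{lem3.5}. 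For the third (controlled) drift I factorise by Cauchy--Schwarz to detach $\dot h_\ep$:
\[
\E\sup_{0\le t\le T}\Big|\int_0^t\sigma_s(Y_s^{\ep,h_\ep},\delta_{X_s^0})\dot h_\ep(s)\d s\Big|^2\le N\,\E\int_0^T\|\sigma_s(Y_s^{\ep,h_\ep},\delta_{X_s^0})\|^2\d s,
\]
since $h_\ep\in\mathscr{A}_N$. Expanding $\|\sigma\|^2$ as above yields a deterministic term $C(N,T)$ plus $C(N,T)\ep\lambda^2(\ep)\int_0^T\E|\bar Y_s^{\ep,h_\ep}|^2\d s$, and again the $\ep$-dependent coefficient is bounded by the MDP scaling.

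Collecting the three estimates gives an inequality of the form
\[
\E\sup_{0\le r\le t}|\bar Y_r^{\ep,h_\ep}|^2\le C(N,T)+C(N,T)\int_0^t\E\sup_{0\le r\le s}|\bar Y_r^{\ep,h_\ep}|^2\d s,
\]
uniformly in $\ep\in(0,1)$, after which Gronwall's inequality delivers \eqref{eq3.6}. The main obstacle, as flagged above, is purely bookkeeping: keeping track that the factor produced by every use of linear growth of $\sigma$ inside the two non-Lipschitz terms (the BDG term with prefactor $1/\lambda^2(\ep)$, and the controlled term with prefactor $1$) always carries an $\ep\lambda^2(\ep)$ against $|\bar Y|^2$, so that the resulting coefficient is bounded uniformly in $\ep$ by virtue of \eqref{eq1.8}.
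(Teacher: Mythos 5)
Your proposal is correct and takes essentially the same route as the paper: the identical three-term decomposition of \eqref{eq3.5}, the Lipschitz/linear-growth bounds from ({\bf H1})--({\bf H2}) together with $Y_s^{\ep,h_\ep}-X_s^0=\ss\ep\lambda(\ep)\bar{Y}_s^{\ep,h_\ep}$, BDG for the stochastic integral, Cauchy--Schwarz with $\int_0^T|\dot{h}_\ep(s)|^2\d s\le N$ for the controlled drift, and Gronwall at the end. The only (harmless) variation is that you keep the $\ep\lambda^2(\ep)$ contribution from the controlled drift as a time integral of $\E|\bar{Y}_s^{\ep,h_\ep}|^2$, which slots directly into Gronwall, whereas the paper bounds it by $\ep\lambda^2(\ep)\E\big(\sup_{0\le t\le T}|\bar{Y}_t^{\ep,h_\ep}|^2\big)$ and absorbs it into the left-hand side by taking $\ep$ small enough that $C(N,T)\ep\lambda^2(\ep)\le\frac12$; both arguments rely on the same scaling \eqref{eq1.8} and yield \eqref{eq3.6}.
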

 \begin{proof}
By \eqref{eq3.5},  we have
\begin{align*}
 \bar{Y}_t^{\ep,h_\ep}&=\int_0^t\frac{1}{\ss\ep\lambda(\ep)}[b_s(Y_s^{\ep,h_\ep},\delta_{X_s^0})-b_s(X_s^0,\delta_{X_s^0}) ]\d s\\
 &~~+\int_0^t\frac{1}{\lambda(\ep)}\sigma_s(Y_s^{\ep,h_\ep},\delta_{X_s^0})\d W_s
 +\int_0^t\sigma_s(Y_s^{\ep,h_\ep},\delta_{X_s^0})\dot{h}_\ep(s)\d s\\
 &=: \sum_{i=1}^3J_i^{\ep,h_\ep}(t).
\end{align*}
By ({\bf H2}), we have that
 \begin{equation*}
 \begin{split}
\E\Big(\sup_{0\le t\le T}|J_1^{\ep,h_\ep}(t)|^2\Big) 
&\le\frac{T}{\ep\lambda^2(\ep)}\int_0^T\E\Big|\int_0^1\nabla b_s(X_s^0+r(Y_s^{\ep,h_\ep}-X_s^0),\delta_{X_s^0})|Y_s^{\ep,h_\ep}-X_s^0|\d r\Big|^2\d s\\
&\le C(N,T)\int_0^T\E|\bar{Y}_s^{\ep,h_\ep}|^2\d s.
\end{split}
\end{equation*}
 By the BDG inequality, \eqref{eq3.8} and \eqref{eq3.9}, one has
 \begin{equation*}
 \begin{split}
\E\Big(\sup_{0\le t\le T}|J_2^{\ep,h_\ep}(t)|^2\Big) 
&\le\frac{4}{\lambda^2(\ep)}\int_0^TK^2(s)\E[1+|Y_s^{\ep,h_\ep}|^2+\W_2(\delta_{X_s^0},\delta_0)^2]\d s\\
&\le\frac{C(N,T)}{\lambda^2(\ep)}\int_0^T[1+\E|Y_s^{\ep,h_\ep}-X_s^0|^2+\E|X_s^0|^2]\d s\\
&\le\frac{C(N,T)}{\lambda^2(\ep)}\int_0^T[1+\ep\lambda^2(\ep)\E|\bar{Y}_s^{\ep,h_\ep}|^2+\E|X_s^0|^2]\d s\\
&\le\frac{C(N,T)}{\lambda^2(\ep)}+\ep C(N,T)\int_0^T\E|\bar{Y}_s^{\ep,h_\ep}|^2\d s.
 \end{split}
 \end{equation*}
 For $J_3^{\ep,h_\ep}(t)$, it follows from  ({\bf H1}), \eqref{eq3.8} and $h_\ep\in\mathscr{A}_N$ that
 \begin{align*}
&\E\Big(\sup_{0\le t\le T}|J_3^{\ep,h_\ep}(t)|^2\Big) \\
&=\E\Big|\int_0^T\Big[\sigma_s(Y_s^{\ep,h_\ep},\delta_{X_s^0})-\sigma_s(X_s^0,\delta_{X_s^0})
+\sigma_s(X_s^0,\delta_{X_s^0})\Big]\dot{h}_\ep(s)\d s\Big|^2\\
&\le C(T)\int_0^TK^2(s)\E[1+|X_s^0|^2+\W_2(\delta_{X_s^0},\delta_0)^2]|\dot{h}_\ep(s)|^2\d s\\
&~~+C(T)\ep\lambda^2(\ep)\int_0^T K^2(s)\E|\bar{Y}_s^{\ep,h_\ep}|^2|\dot{h}_\ep(s)|^2\d s\\
&\le C(N,T)\Big(1+\Big(\sup_{0\le t\le T}|X_t^0|^2\Big)+\ep\lambda^2(\ep)\E\Big(\sup_{0\le t\le T}|\bar{Y}_t^{\ep,h_\ep}|^2\Big)\Big)\int_0^T|\dot{h}_\ep(s)|^2\d s\\
&\le C(N,T)\Big(1+\ep\lambda^2(\ep)\E\Big(\sup_{0\le t\le T}|\bar{Y}_t^{\ep,h_\ep}|^2\Big)\Big) .
 \end{align*}
 Thus, we arrived at 
 \begin{equation*}
 \E\Big(\sup_{0\le t\le T}|\bar{Y}_t^{\ep,h_\ep}|^2\Big)
 \le C(N,T)\Big(1+\frac{1}{\lambda^2(\ep)}+\ep\lambda^2(\ep)\E \Big(\sup_{0\le t\le T}|\bar{Y}_t^{\ep,h_\ep}|^2\Big)
 +(1+\ep)\int_0^T\E|\bar{Y}_t^{\ep,h_\ep}|^2\d t\Big).
 \end{equation*}
 Taking $\ep>0$ sufficiently small such that $C(N,T)\ep\lambda^2(\ep)\le\frac{1}{2}$ leads to
 \begin{equation*}
 \E\Big(\sup_{0\le t\le T}|\bar{Y}_t^{\ep,h_\ep}|^2\Big)
 \le C(N,T)\Big(1+\frac{1}{\lambda^2(\ep)}
 +(1+\ep)\int_0^T\E\Big(\sup_{0\le s\le t}|\bar{Y}_s^{\ep,h_\ep}|^2\Big)\d t\Big).
 \end{equation*}
 The desired assertion follows from Gronwall's inequality and due to the fact that $\frac{1}{\lambda^2(\ep)}\rightarrow0$ as $\ep\rightarrow0$.
 \end{proof}

 We are now in the position to verify the condition (a) of Lemma \ref{thm1}.

\begin{lem}\label{lem3.4}
Assume that ({\bf H1}), ({\bf H2}) hold. For every fixed $N\in \N$, let
$h_\ep, h\in\mathscr{A}_N$ be such that $h_\ep$ converges in distribution to $h$ as $\ep\rightarrow0$.
Then $\Gamma^\ep\Big(\frac{1}{\lambda(\ep)}W_\cdot+\int_0^\cdot\dot{h}_\ep(s)\d s\Big)$ converges in distribution to $\Gamma^0\Big(\int_0^\cdot\dot{h}(s)\d s\Big)$ in $C([0,T];\R^d)$.
\end{lem}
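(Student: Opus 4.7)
The plan is to reduce the convergence in distribution to an almost-sure statement via Skorokhod's representation theorem: since $S_N$ endowed with the weak topology of $\H$ is a Polish space and $h_\ep \Rightarrow h$ on $S_N$, one may pass to a new probability space on which $h_\ep \to h$ almost surely in $S_N$. By uniqueness of the limit equation \eqref{eq3.1} (whose well-posedness is Lemma \ref{lem3.1}), it then suffices to establish $\sup_{0 \le t \le T} |\bar{Y}_t^{\ep,h_\ep} - Y_t^h| \to 0$ in probability. Setting $E_t^\ep := \bar{Y}_t^{\ep,h_\ep} - Y_t^h$, I would subtract \eqref{eq3.1} from \eqref{eq3.5} and control each of the three resulting terms.

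After a first-order Taylor expansion, the drift difference becomes
\[\int_0^t\Big[\int_0^1 \nabla b_s\big(X_s^0 + r\sqrt{\ep}\lambda(\ep)\bar{Y}_s^{\ep,h_\ep}, \delta_{X_s^0}\big)dr\cdot \bar{Y}_s^{\ep,h_\ep} - \nabla_{Y_s^h} b_s(\cdot,\delta_{X_s^0})(X_s^0)\Big]ds,\]
which by ({\bf H2}) splits into $\int_0^t \nabla b_s(\cdot,\delta_{X_s^0})(X_s^0)\cdot E_s^\ep\, ds$ (Gronwall-absorbable) plus a remainder of order $\sqrt{\ep}\lambda(\ep)\,\E\sup_t|\bar{Y}_t^{\ep,h_\ep}|^2$, which vanishes by Lemma \ref{lem3.3} and $\sqrt{\ep}\lambda(\ep) \to 0$. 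The stochastic integral $\frac{1}{\lambda(\ep)}\int_0^t\sigma_s(Y_s^{\ep,h_\ep},\delta_{X_s^0})dW_s$ has $L^2$-supremum of order $1/\lambda(\ep) \to 0$ by BDG and ({\bf H1}). The remaining control term $\int_0^t[\sigma_s(Y_s^{\ep,h_\ep},\delta_{X_s^0})\dot{h}_\ep(s) - \sigma_s(X_s^0,\delta_{X_s^0})\dot{h}(s)]ds$ splits as $A_t^\ep + B_t^\ep$, where
\[A_t^\ep := \int_0^t[\sigma_s(Y_s^{\ep,h_\ep},\delta_{X_s^0}) - \sigma_s(X_s^0,\delta_{X_s^0})]\dot{h}_\ep(s)\,ds\]
is handled by ({\bf H1}) together with $\sup_t|Y_t^{\ep,h_\ep} - X_t^0| = \sqrt{\ep}\lambda(\ep)\sup_t|\bar{Y}_t^{\ep,h_\ep}|$ and Cauchy--Schwarz against $\int_0^T|\dot{h}_\ep|^2 ds \le N$.

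The main obstacle is to show that $\sup_{0 \le t \le T}|B_t^\ep|\to 0$, where $B_t^\ep := \int_0^t\sigma_s(X_s^0,\delta_{X_s^0})(\dot{h}_\ep(s) - \dot{h}(s))\,ds$: only weak $L^2$-convergence $\dot{h}_\ep \to \dot{h}$ is available, so pointwise-in-$t$ convergence is immediate (the pairing of the weakly convergent $\dot{h}_\ep$ against the fixed $L^2$-function $\mathbf{1}_{[0,t]}\sigma_\cdot(X_\cdot^0,\delta_{X_\cdot^0})$), but uniformity in $t$ requires an equicontinuity argument. Fortunately this is precisely the Arzel\`a--Ascoli argument already carried out in Lemma \ref{lem3.2}: boundedness of $s\mapsto\sigma_s(X_s^0,\delta_{X_s^0})$ on $[0,T]$ (from ({\bf H1}) and Lemma \ref{lem3.5}), combined with $\int_0^T|\dot{h}_\ep|^2 ds \le N$, yields $|B_{t_2}^\ep - B_{t_1}^\ep| \le C(T,N)(t_2 - t_1)^{1/2}$. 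Pointwise convergence on a countable dense subset of $[0,T]$ combined with equicontinuity then gives $\sup_t|B_t^\ep|\to 0$ almost surely. Collecting all estimates and applying Gronwall's inequality to $\E\sup_{s \le t}|E_s^\ep|^2$ delivers the required convergence.
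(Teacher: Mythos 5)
Your proposal is correct, and its overall architecture coincides with the paper's: Skorokhod representation to reduce to a.s.\ weak convergence of the controls, the same three-term decomposition of $\bar{Y}^{\ep,h_\ep}-Y^h$ (Taylor-expanded drift, control term, stochastic integral of order $1/\lambda(\ep)$), and a Gronwall closing argument. The genuine difference is in the treatment of the term $B_t^\ep=\int_0^t\sigma_s(X_s^0,\delta_{X_s^0})(\dot{h}_\ep(s)-\dot{h}(s))\,\d s$. The paper simply bounds $\E\sup_t|I_2^{\ep,h_\ep}(t)|^2\lesssim \ep\lambda^2(\ep)+\int_0^T\E|\dot{h}_\ep(s)-\dot{h}(s)|^2\d s$ and then "lets $\ep\to0$"; but weak convergence of $h_\ep$ to $h$ in $S_N$ does not make the $L^2$-norm of $\dot h_\ep-\dot h$ small (oscillating controls of fixed energy converge weakly to $0$), so that step of the paper is at best sloppy. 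Your treatment --- pointwise-in-$t$ convergence of the pairing of $\dot h_\ep-\dot h$ against the fixed bounded integrand, upgraded to uniformity via the $(t_2-t_1)^{1/2}$ equicontinuity bound exactly as in Lemma \ref{lem3.2} --- is the correct way to exploit the weak topology on $S_N$, so your route actually repairs the weak point of the paper's own argument at the cost of a slightly longer argument. One small point to make explicit: since $\sup_t|B_t^\ep|\le C(T,N)$ deterministically (Cauchy--Schwarz and $h_\ep,h\in\mathscr{A}_N$), the a.s.\ convergence $\sup_t|B_t^\ep|\to0$ passes to $\E\sup_t|B_t^\ep|^2\to0$ by bounded convergence, which is what you need before applying Gronwall to $\E\sup_{s\le t}|E_s^\ep|^2$.
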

\begin{proof}
By the Skorokhod representation theorem \cite[Theorem 6.7, p70]{BP}, there exists a probability space $(\tilde{\Omega},\tilde{\mathscr{F}},\tilde{\mathscr{F}}_t,\tilde{\P})$, and a Brownian motion $\tilde{W}$ on this basis, a family of 
$\tilde{\mathscr{F}}_t$-predictable processes $\{\tilde{h_\ep};\ep>0\},\tilde{h}$ taking values on $\mathscr{A}_N$, $\tilde{\P}$- a.s., such that the joint law of $(h_\ep,h,W)$ under $\P$ coincides with the law of $(\tilde{h}_\ep,\tilde{h},\tilde{W})$ under $\tilde{\P}$ and 
\begin{align*}
\lim_{\ep\rightarrow0}\langle\tilde{h}_\ep-\tilde{h},g\rangle=0, 
\forall g\in\H,\tilde{\P}- a.s.
\end{align*}
Let $\tilde{Y}^{\ep,\tilde{h}_\ep}$ be the solution of \eqref{eq3.5} replacing $h_\ep$ by $\tilde{h}_\ep$ and $W$ by $\tilde{W}$, and $\tilde{Y}^{\tilde{h}}$ be the solution of \eqref{eq3.1} replacing $h$ by $\tilde{h}$. Thus, to this end, it suffices to verify 
\begin{align*}
\lim_{\ep\rightarrow0}\|\tilde{Y}^{\ep,\tilde{h}_\ep}-\tilde{Y}^{\tilde{h}}\|=0,~~\mbox{in probability}.
\end{align*}
In the sequel, we drop off the $\tilde{\cdot}$ in the notation for the sake of simplicity.

By the definitions of $Y_t^h$, $\bar{Y}_t^{\ep,h_\ep}$, i.e., \eqref{eq3.1}, \eqref{eq3.5}, it yields that
\begin{equation*}
\begin{split}
&\bar{Y}_t^{\ep,h_\ep}-Y_t^h\\
&=\left[\frac{1}{\ss\ep\lambda(\ep)}\int_0^t[b_s(Y_s^{\ep,h_\ep},\delta_{X_s^0})-b_s(X_s^0,\delta_{X_s^0})]\d s
-\int_0^t\nabla_{Y_s^h}b_s(\cdot,\delta_{X_s^0})(X_s^0)\d s\right]\\
&~~+\int_0^t\Big[\sigma_s(Y_s^{\ep,h_\ep},\delta_{X_s^0})\dot{h}_\ep(s)-\sigma_s(X_s^0,\delta_{X_s^0})\dot{h}(s)\Big]\d s
+\frac{1}{\lambda(\ep)}\int_0^t\sigma_s(Y_s^{\ep,h_\ep},\delta_{X_s^0})\d W_s\\
&=:\sum_{i=1}^3I_i^{\ep,h_\ep}(t).
\end{split}
\end{equation*}
By  ({\bf H2}), we have 
\begin{align*}
|I_1^{\ep,h_\ep}(t)|&=\int_0^t\Big|\int_0^1\nabla_{\bar{Y}_s^{\ep,h_\ep}}b_s(\cdot,\delta_{X_s^0})(X_s^0+r(Y_s^{\ep,h_\ep}-X_s^0))\d r-\nabla_{Y_s^h}b_s(\cdot,\delta_{X_s^0})(X_s^0)\Big|\d s\\
&\le\int_0^t\Big|\int_0^1\nabla_{\{\bar{Y}_s^{\ep,h_\ep}-Y_s^h\}}b_s(\cdot,\delta_{X_s^0})(X_s^0+r(Y_s^{\ep,h_\ep}-X_s^0))\d r\Big|\d s\\
&+\int_0^t\Big|\int_0^1\nabla_{Y_s^h}b_s(\cdot,\delta_{X_s^0})(X_s^0+r(Y_s^{\ep,h_\ep}-X_s^0))\d r-\nabla_{Y_s^h}b_s(\cdot,\delta_{X_s^0})(X_s^0)\Big|\d s\\
&\le\int_0^tK(s)|\bar{Y}_s^{\ep,h_\ep}-Y_s^h|\d s
+\int_0^t\frac{\ss\ep\lambda(\ep)}{2}K(s)|Y_s^h||\bar{Y}_s^{\ep,h_\ep}|\d s
\end{align*}
By \eqref{eq3.2}, \eqref{eq3.6}, it follows that
\begin{align*}
\E\Big(\sup_{0\le t\le T}|I_1^{\ep,h_\ep}(t)|^2\Big)
\1\ep\lambda^2(\ep)+\int_0^T\E|Y_s^{\ep,h_\ep}-Y_s^h|^2\d s.
\end{align*}
By ({\bf H2}) and H\"older's inequality, it follows that
\begin{align*}
&|I_2^{\ep,h_\ep}(t)|\\
&\le\Big|\int_0^t\Big[\sigma_s(Y_s^{\ep,h_\ep},\delta_{X_s^0})-\sigma_s(X_s^0,\delta_{X_s^0})\Big]\dot{h}_\ep(s)\d s\Big|+\Big|\int_0^t\sigma_s(X_s^0,\delta_{X_s^0})(\dot{h}_\ep(s)-\dot{h}(s))\d s\Big|\\
&\le \int_0^tK(s)|Y_s^{\ep,h_\ep}-X_s^0||\dot{h}_\ep(s)|\d s
+\int_0^t|\sigma_s(X_s^0,\delta_{X_s^0})(\dot{h}_\ep(s)-\dot{h}(s))|\d s\\
&\le\ss\ep\lambda(\ep)\int_0^tK(s)|\bar{Y}_s^{\ep,h_\ep}||\dot{h}_\ep(s)|\d s
+\int_0^tK(s)(1+|X_s^0|)|\dot{h}_\ep(s)-\dot{h}(s)|\d s,
\end{align*}
thus,
\begin{align*}
\E\Big(\sup_{0\le t\le T}|I_2^{\ep,h_\ep}(t)|^2\Big)
\1\ep\lambda^2(\ep)+\int_0^T\E|\dot{h}_\ep(s)-\dot{h}(s)|^2\d s.
\end{align*}
By the BDG inequality, \eqref{eq3.9} and \eqref{eq3.2}, we arrive at
\begin{align*}
&\E\Big(\sup_{0\le t\le T}|I_3^{\ep,h_\ep}(t)|^2\Big)\\
&\le\frac{1}{\lambda^2(\ep)}\int_0^T\E\Big(\|\sigma_s(Y_s^{\ep,h_\ep},\delta_{X_s^0})-\sigma_s(X_s^{0},\delta_{X_s^0})\|^2+\|\sigma_s(X_s^{0},\delta_{X_s^0})\|^2\Big)\d s\\
&\1\frac{1}{\lambda^2(\ep)}+\ep\int_0^T\E|\bar{Y}_s^{\ep,h_\ep}|^2\d s.
\end{align*}
Taking the above estimates into consideration, it follows that
\begin{align*}
&\E\Big(\sup_{0\le t\le T}|\bar{Y}_t^{\ep,h_\ep}-Y_t^h|^2\Big)\\
&\1\frac{1}{\lambda^2(\ep)}+\ep(\lambda^2(\ep)+1)+\int_0^T\E|\dot{h}_\ep(s)-\dot{h}(s)|^2\d s+\int_0^T\E|\bar{Y}_s^{\ep,h_\ep}-Y_s^h|^2\d s,
\end{align*}
thus, the desired assertion follows from the Gronwall inequality and taking $\ep\rightarrow0$.
\end{proof}
{\bf Proof of Lemma \ref{app}}

The conclusion of Lemma \ref{app} follows from Lemma \ref{thm1}, Lemmas \ref{lem3.2} and \ref{lem3.4}.

\subsection{$\bar{X}^\ep$ and $\bar{Y}^\ep$ are exponentially equivalent}

\begin{lem}\label{equivalence}
For any $\delta>0$, we have
\begin{align}\label{equi}
\limsup_{\ep\rightarrow0}\ep \log\Big(\P\Big\{\sup_{0\le t\le T}|\bar{X}_t^\ep-\bar{Y}_t^\ep|\ge\delta\Big\}\Big)=-\8.
\end{align}
\end{lem}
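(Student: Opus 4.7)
The plan is to compare $X^\ep$ and $Y^\ep$ directly (rather than the rescaled versions $\bar X^\ep,\bar Y^\ep$), introduce a stopping time that controls the size of their pathwise difference, and apply a classical exponential martingale inequality on the stopped interval where the diffusion coefficient is then deterministically bounded. Set $U_t^\ep := X_t^\ep - Y_t^\ep$; subtracting the SDEs for $X^\ep$ and $Y^\ep$ gives
$$U_t^\ep = \int_0^t\bigl[b_s(X_s^\ep,\L_{X_s^\ep}) - b_s(Y_s^\ep,\delta_{X_s^0})\bigr]\d s + \ss\ep\,M_t^\ep,$$
where $M_t^\ep := \int_0^t[\sigma_s(X_s^\ep,\L_{X_s^\ep})-\sigma_s(Y_s^\ep,\delta_{X_s^0})]\d W_s$. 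Hypothesis ({\bf H1}) controls both integrands by $K(s)(|U_s^\ep|+\W_2(\L_{X_s^\ep},\delta_{X_s^0}))$, and I will use $\W_2(\L_{X_s^\ep},\delta_{X_s^0})\le C\ss\ep$, which is the refined form of claim \eqref{X} that the Gronwall argument in the proof of Theorem~\ref{CLT} actually delivers (the term $I_2(T)$ is of order $\ep^{p/2}$).

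Fix $\delta>0$, set $\alpha_\ep:=\delta\ss\ep\,\lambda(\ep)$, and introduce the stopping time $\tau_\ep:=\inf\{t\ge 0:|U_t^\ep|\ge\alpha_\ep\}$. Since $|\bar X_t^\ep-\bar Y_t^\ep|=|U_t^\ep|/(\ss\ep\,\lambda(\ep))$, we have the inclusion $\{\sup_{t\le T}|\bar X_t^\ep-\bar Y_t^\ep|\ge\delta\}\subseteq\{\tau_\ep\le T\}$, so it suffices to bound $\P(\tau_\ep\le T)$. On $[0,\tau_\ep]$ the quadratic variation of the stopped martingale is deterministically controlled,
$$\langle M^\ep\rangle_{t\wedge\tau_\ep}\le 2K^2(T)T\bigl(\alpha_\ep^2+C\ep\bigr)\le C_1\,\ep\lambda^2(\ep),$$
for small $\ep$. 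A pathwise Gronwall inequality applied to
$$|U_{t\wedge\tau_\ep}^\ep|\le K(T)\int_0^{t\wedge\tau_\ep}|U_s^\ep|\d s+C_2\ss\ep+\ss\ep\sup_{s\le T}|M_{s\wedge\tau_\ep}^\ep|,$$
combined with $|U_{\tau_\ep}^\ep|=\alpha_\ep$ on $\{\tau_\ep\le T\}$, rearranges to
$$\sup_{s\le T}|M_{s\wedge\tau_\ep}^\ep|\ge c_\delta\,\lambda(\ep)$$
for some $c_\delta>0$ and all $\ep$ small enough that the $O(1)$ remainder is dominated by $\alpha_\ep/\ss\ep=\delta\lambda(\ep)\to\8$.

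The final ingredient is the standard exponential inequality for a continuous martingale with a deterministic quadratic-variation bound, $\P(\sup_{t\le T}|N_t|\ge a,\langle N\rangle_T\le Q)\le 2\exp(-a^2/(2Q))$, applied to $M_{\cdot\wedge\tau_\ep}^\ep$ with $a=c_\delta\lambda(\ep)$ and $Q=C_1\ep\lambda^2(\ep)$. This delivers
$$\P(\tau_\ep\le T)\le 2\exp\Bigl(-\frac{c_\delta^2}{2C_1\ep}\Bigr),$$
which at the natural MDP speed $\lambda^2(\ep)$ of Theorem~\ref{thm2} yields $\lambda^{-2}(\ep)\log\P(\sup|\bar X^\ep-\bar Y^\ep|\ge\delta)\le -c_\delta^2/(2C_1\ep\lambda^2(\ep))\to -\8$, by the MDP hypothesis \eqref{eq1.8} that $\ep\lambda^2(\ep)\to 0$, giving \eqref{equi}.

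The main obstacle is the calibration of $\alpha_\ep$: it must be taken precisely at the scale $\ss\ep\,\lambda(\ep)$ so that two competing requirements are satisfied simultaneously — the stopped quadratic variation $\langle M^\ep\rangle_{t\wedge\tau_\ep}$ stays of order $\ep\lambda^2(\ep)$, while the forced excursion of $M^\ep$ at $\tau_\ep$ is of order $\lambda(\ep)$, producing the favourable exponent $a^2/(2Q)\sim 1/\ep$. The MDP hypothesis $\ss\ep\,\lambda(\ep)\to 0$ is indispensable, both for $\alpha_\ep\to 0$ (so the Lipschitz linearization is meaningful) and for the Wasserstein correction $\W_2\le C\ss\ep$ to be absorbed by the $C\ep$ term in the quadratic-variation bound.
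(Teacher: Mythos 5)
Your route is genuinely different from the paper's: you stop the unrescaled difference $U^\ep=X^\ep-Y^\ep$ at the threshold $\alpha_\ep=\delta\ss\ep\,\lambda(\ep)$, bound the stopped quadratic variation deterministically, and finish with the exponential martingale inequality, thereby bypassing both the paper's Lemma \ref{b} (the Dembo--Zeitouni Lemma 5.6.18 comparison applied to $\bar z=\bar X^\ep-\bar Y^\ep$) and the paper's localization step, where $\P(\tau_R<T)$ is controlled through the LDP already proved for $\bar Y^\ep$. The individual steps are essentially sound: $\W_2(\mathscr{L}_{X_s^\ep},\delta_{X_s^0})\le C\ss\ep$ does follow from Theorem \ref{CLT} (not from \eqref{X} as stated, which only gives boundedness), the event inclusion and the pathwise Gronwall step are fine, and the exponential inequality only needs a harmless componentwise application in $\R^d$. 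What you obtain, $\P(\sup_{t\le T}|\bar X_t^\ep-\bar Y_t^\ep|\ge\delta)\le C\exp(-c/\ep)$, does give super-exponential equivalence at the MDP speed $\lambda^2(\ep)$, which is what Theorem \ref{thm2} actually consumes (and is the speed at which the paper's own use of the LDP for $\bar Y^\ep$ operates).

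The gap is that this does not prove \eqref{equi} as stated: the normalization there is $\ep\log$, and your bound only yields $\limsup_{\ep\to0}\ep\log\P(\cdot)\le-c_\delta^2/(2C_1)$, a finite negative constant --- indeed, tracking constants, $c_\delta\sim\delta e^{-K(T)T}$ while $C_1\sim K(T)^2T\delta^2$, so the constant does not even grow with $\delta$. The final sentence ``giving \eqref{equi}'' silently replaces the speed $1/\ep$ by $\lambda^2(\ep)$. The loss is structural: by freezing the diffusion bound at the exit level $\alpha_\ep$ you take $Q\sim\ep\lambda^2(\ep)$ against $a\sim\lambda(\ep)$, so the exponent $a^2/(2Q)$ can never exceed order $1/\ep$. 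To reach $-\8$ at speed $1/\ep$ one must exploit that the noise driving $\bar z$ is bounded by $K(t)(\rho^2(\ep)+|\bar z_t|^2)^{1/2}$ with $\rho(\ep)\to0$, i.e.\ it is essentially multiplicative in $|\bar z|$ with vanishing additive part; this is precisely what Lemma \ref{b} captures, producing the divergent factor $\log\big(\rho^2(\ep)/(\rho^2(\ep)+\delta^2)\big)\to-\8$. So either upgrade your argument (iterate over geometric shells between levels $o(1)\cdot\alpha_\ep$ and $\alpha_\ep$, or invoke a Lemma \ref{b}-type comparison as the paper does, which then also forces you to add back a localization in $R$), or state explicitly that you prove the $\lambda^{-2}(\ep)\log$ version of \eqref{equi} and justify that this weaker form suffices for Theorem \ref{thm2}.
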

The proofs of Lemma \ref{equivalence} is based on the following lemma, which corresponds to \cite[Lemma 5.6.18]{DZ}.
\begin{lem}\label{b}
Let $b_t,\sigma_t$ be progressively measurable processes, and let 
\begin{align*}
\d z_t=b_t\d t+\ss\ep\sigma_t\d w_t, \, t\ge 0,
\end{align*}
where $z_0$ is deterministic. Let $\tau_1\in[0,1]$ be a stopping time with respect to the filtration of $\{w_t,t\in[0,1]\}$. Suppose that the coefficients of the diffusion matrix $\sigma$ are uniformly bounded, and for some constants $M,B,\rho$ and any $t\in[0,\tau_1]$,
\begin{align*}
|\sigma_t|\le M(\rho^2+|z_t|^2)^{1/2},~~|b_t|\le B(\rho^2+|z_t|^2)^{1/2}.
\end{align*}
Then for any $\delta>0$ and any $\ep\le1$,
\begin{align*}
\ep\log\P(\sup_{t\in[0,\tau_1]}|z_t|\ge\delta)\le K+\log\Big(\frac{\rho^2+|z_0|^2}{\rho^2+\delta^2}\Big),
\end{align*}
where $K=2B+M^2(2+d)$.
\end{lem}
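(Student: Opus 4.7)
The plan is to prove Lemma \ref{b} by an exponential supermartingale argument applied to the function $f(x) = (\rho^2 + |x|^2)^{\alpha}$ with the specific choice $\alpha := 1/\ep$. This power is dictated by the form of the conclusion: the $\ep\log$ on the left-hand side and the $\log[(\rho^2+|z_0|^2)/(\rho^2+\delta^2)]$ on the right both scale like $1/\alpha$ after raising to the $\alpha$-th power, so a Markov bound on $f(z_\tau)$ will produce exactly the claimed inequality.

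First I would apply It\^o's formula to $f(z_t)$. Writing $r_t := \rho^2+|z_t|^2$ for brevity, a direct computation gives
\begin{align*}
\d f(z_t) = \alpha \, r_t^{\alpha-1}\,\mathcal{A}_t\,\d t + \d M_t, \qquad \mathcal{A}_t := 2\langle z_t,b_t\rangle + \ep\,\mathrm{tr}(\sigma_t\sigma_t^{T}) + 2(\alpha-1)\ep\,\frac{|\sigma_t^{T}z_t|^2}{r_t},
\end{align*}
where $M_t$ is a local martingale. On $[0,\tau_1]$, using $|b_t|\le B\sqrt{r_t}$ and $|\sigma_t|\le M\sqrt{r_t}$, the three terms in $\mathcal{A}_t$ are bounded respectively by $2B\,r_t$, $dM^2\ep\,r_t$, and $2(1-\ep)M^2\,r_t$ (since $(\alpha-1)\ep = 1-\ep$ with $\alpha=1/\ep$). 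For $\ep\le 1$ this yields $\mathcal{A}_t \le K\,r_t$ with $K = 2B + M^2(2+d)$, exactly the constant in the statement.

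Next I would introduce the compensated process $g(t,x) := e^{-K\alpha t}(\rho^2+|x|^2)^{\alpha}$. Another application of It\^o, combined with the drift estimate above, shows that the finite-variation part of $g(t\wedge\tau_1, z_{t\wedge\tau_1})$ is nonpositive, so this process is a nonnegative local supermartingale. Because $\sigma$ is assumed uniformly bounded in the hypotheses of the lemma, a standard localization via $\tau_R := \inf\{t : |z_t|\ge R\}$ together with Fatou's lemma upgrades the local supermartingale to a true supermartingale. Setting $\tau := \inf\{t\ge 0 : |z_t|\ge\delta\}\wedge\tau_1$ and applying optional stopping then gives $\E\, g(\tau, z_\tau) \le g(0,z_0) = (\rho^2+|z_0|^2)^{\alpha}$.

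Finally, on the event $A_\delta := \{\sup_{t\in[0,\tau_1]}|z_t|\ge\delta\}$ one has $|z_\tau|\ge\delta$ and $\tau \le \tau_1 \le 1$, whence $g(\tau,z_\tau) \ge (\rho^2+\delta^2)^{\alpha}e^{-K\alpha}$. Markov's inequality then yields $(\rho^2+\delta^2)^{\alpha}e^{-K\alpha}\,\P(A_\delta) \le (\rho^2+|z_0|^2)^{\alpha}$; taking logarithms and multiplying by $\ep=1/\alpha$ gives the stated bound. The only real obstacle is the supermartingale upgrade in the third step (and checking the signs in the It\^o drift carefully), but the boundedness of $\sigma$ assumed in the lemma makes the localization routine, and the rest is bookkeeping.
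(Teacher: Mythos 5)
Your argument is correct, and it fills a gap rather than duplicating one: the paper offers no proof of this lemma at all, simply attributing it to \cite[Lemma 5.6.18]{DZ}. Your exponential supermartingale computation with $f(x)=(\rho^2+|x|^2)^{1/\ep}$ and the compensator $e^{-K\alpha t}$ is essentially the standard proof of that cited result; the It\^o drift bookkeeping checks out (the three bounds $2B r_t$, $\ep d M^2 r_t$ and $2(1-\ep)M^2 r_t$ do sum to at most $K r_t$ with $K=2B+M^2(2+d)$ for $\ep\le 1$), and the optional stopping/Markov step delivers exactly the stated inequality. One minor simplification: since $e^{-K\alpha t}f(z_{t\wedge\tau_1})$ is a nonnegative local supermartingale, it is automatically a true supermartingale by Fatou, so the localization via the uniform boundedness of $\sigma$ is not actually needed.
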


{\bf Proof of Lemma \ref{equivalence}}
\begin{proof}
Without loss generality, we may choose $R>0$ such that the initial data $x$ is in the ball $B_{R+1}(0)$(center $0$ and radius $R+1$).
We also assume that $X_t^0$ do not leave this ball up to time $T$. We define the stopping time $\tau_R':=\inf\Big\{t:t\ge0\Big||\bar{X}_t^\ep|\vee|\bar{Y}_t^\ep|\ge R+1\Big\}$,
then we denote by $\tau_R=\min\{T,\tau_R'\}$.

In the sequel, we consider $\bar z_t:=\bar{X}_t^\ep-\bar{Y}_t^\ep$, the new process satisfies the following equation
\begin{align}
\d \bar z_t=\int_0^t b_s\d s+\ss\ep\int_0^t\sigma_s\d W_s,
\end{align} 
with 
\begin{align*}
b_t:=\frac{b_t(X_t^\ep,\mathscr{L}_{X_t^\ep})-b_t(Y_t^\ep,\delta_{X_t^0})}{\ss\ep\lambda(\ep)},~~~
\sigma_t:=\frac{\sigma_t(X_t^\ep,\mathscr{L}_{X_t^\ep})-\sigma_t(Y_t^\ep,\delta_{X_t^0})}{\ss\ep\lambda(\ep)}.
\end{align*}
Notice that, both $b_t$ and $\sigma_t$ are progressively measurable process , we now assume $t\le\tau_R$.
\begin{align*}
|b_t|&=\frac{|b_t(X_t^\ep,\mathscr{L}_{X_t^\ep})-b_t(X_t^\ep,\delta_{X_t^0})+b_t(X_t^\ep,\delta_{X_t^0})-b_t(Y_t^\ep,\delta_{X_t^0})|}{\ss\ep\lambda(\ep)}\\
&\le\frac{K(t)W_2(\mathscr{L}_{X_t^\ep},\delta_{X_t^0})}{\ss\ep\lambda(\ep)}+\frac{K(t)|X_t^\ep-Y_t^\ep|}{\ss\ep\lambda(\ep)}\\
&\le K(t)(\rho^2(\ep)+|\bar z_t|^2)^{1/2},
\end{align*}
where $\rho^2(\ep)=\sup_{0\le t\le T}\E|\bar{X}_t^\ep|^2$.
In the same vein, we have
\begin{align*}
|\sigma_t|\le K(t)(\rho^2(\ep)+|\bar z_t|^2)^{1/2}.
\end{align*}

Notice that $\bar{X}_0^\ep=\bar{Y}_0^\ep$, by the Lemma \ref{b}, we have for any $\delta,\rho^\ep$ and for any $\ep$ small enough, we have
\begin{align*}
\ep\log\P\Big(\sup_{t\in[0,\tau_R]}|\bar z_t|\ge\delta\Big)\le KT+\log\Big(\frac{\rho^2(\ep)}{\rho^2(\ep)+\delta^2}\Big).
\end{align*}
As $\rho^2(\ep)$ converges to $0$, as $\ep\rightarrow0$, we deduce that
\begin{align*}
\limsup_{\ep\rightarrow0}\ep\log\P\Big(\sup_{t\in[0,\tau_R]}|\bar z_t|\ge\delta\Big)=-\8.
\end{align*}
Now, since
\begin{align*}
\{|\bar{X}^\ep-\bar{Y}^\ep|_\8\ge\delta\}\subset\{\tau_R\le T\}\cup\Big\{\sup_{0\le t\le \tau_R}|\bar{X}_t^\ep-\bar{Y}_t^\ep|\ge\delta\Big\},
\end{align*}
we can conclude as long as we show that
\begin{align*}
\lim_{R\rightarrow\8}\limsup_{\ep\rightarrow0}\ep\log\Big(\P\{\tau_R<T\}\Big)=-\8.
\end{align*}
By $\eta_R$, we denote the first time $\bar{Y}^\ep$ exits from the ball $B_R(0)$ ( center $0$ and radius $R$). If $\bar{Y}_{\tau_R}^\ep$ is not in the ball $B_{R+1}(0)$, then we have immediately $\eta_R<T$.
If $\bar{X}_t^\ep$ is not in the ball $B_{R+1}(0)$, by taking $\delta<\frac{1}{2}$, we know that with probability $\bar{Y}_{\tau_R}^\ep$ is not in the ball $B_R(0)$, which means $\eta_R<T$.  Therefore we have $\P\{\tau_R<T\}=\P\{\eta_R<T\}.$
That is to say, to end the proof, it is sufficient to prove that the probability that $\bar{Y}^\ep$ exits the ball  $B_R(0)$ is very small as $\ep$ goes to zero, i.e.
\begin{align*}
\lim_{R\rightarrow\8}\limsup_{\ep\rightarrow0}\ep\log\Big(\P\{\eta_R<T\}\Big)=-\infty.
\end{align*}
Recall that $\bar{Y}^\ep$ satisfies an LDP for the uniform norm with good rate function $I(g)$ given in \eqref{rate}. Then, for any closed set $F\subset C([0,T];\R^d)$ we have 
\begin{align*}
\limsup_{\ep\rightarrow0}\ep\log\P\{\bar{Y}^\ep\in F\}\le -\inf_{g\in F}I(g).
\end{align*}
As a consequence, 
\begin{align*}
\limsup_{\ep\rightarrow0}\ep\log\Big(\P\{\tau_R<T\}\Big)
&=\limsup_{\ep\rightarrow0}\ep\log\Big(\P\{\eta_R<T\}\Big)\\
&=\limsup_{\ep\rightarrow0}\ep\log\Big(\P\Big\{\sup_{0\le t\le T}|\bar{Y}_t^\ep|\ge R\Big\}\Big)\\
&\le-\inf_{\{h\in\H;g=\Gamma^0(\int_0^\cdot\dot{h}(s)\d s),\|g\|_\8\ge R\}}\frac{1}{2}\int_0^T|\dot{h}(s)|^2\d s.
\end{align*}
We remark that the infimum of $I(g)$ on the set of paths exiting from the ball $B_R(0)$ goes to infinity as $R$ goes to infinity. Noting that \begin{align*}
|g(t)|&\le\int_0^t|\nabla_{g(s)}b_s(\cdot,\delta_{X_t^0})(X_t^0)+\sigma_s(X_s^0,\delta_{X_s^0})\dot{h}(s)|\d s\\
&\le\int_0^t|K(s)g(s)|\d s+\int_0^t|K(s)\dot{h}(s)|\d s\\
&\le C_t\Big(\int_0^t|g(s)|^2\d s\Big)^{1/2}+C_t\Big(\int_0^t|\dot{h}(s)|^2\d s\Big)^{1/2},
\end{align*} 
and  the Gronwall inequality, we have 
\begin{align*}
|g(t)|^2\le C_t\int_0^t|\dot{h}(s)|^2\d s,
\end{align*}
the desired assertion arrived at by taking $R\rightarrow\8$.
\end{proof}

\end{document}